\newtheorem{theorem}{Theorem}[section]
\newtheorem{corollary}[theorem]{Corollary}
\newtheorem{lemma}[theorem]{Lemma}
\newtheorem{remark}[theorem]{Remark}
\newcommand{\R}{\mathbb{R}}
\newcommand{\N}{\mathcal{N}}
\newcommand{\dx}{\mathrm{d}x}
\newcommand{\la}{\langle}
\newcommand{\ra}{\rangle}
\numberwithin{equation}{section}
\title{Ground state of indefinite coupled nonlinear Schr\"odinger systems}
\author{Ruijin Xu,  Jiabao Su$^\ast$, Rushun Tian\thanks{Corresponding authors: sujb@cnu.edu.cn, rushun.tian@cnu.edu.cn} \\{\small  School of Mathematical Sciences,  Capital Normal	University}\\{  \small Beijing 100048, People's Republic of China}}
\date{}
\begin{document}
	\maketitle
	\begin{abstract}
		{In this paper, we study the ground state solutions of the following} coupled nonlinear Schr\"odinger system
		\begin{equation*}
			\begin{cases}
				-\Delta u_1-\tau_1 u_1 =\mu_1u_1^3+\beta u_1u_2^2 & \text { in } \Omega,\\
				-\Delta u_2-\tau_2 u_2 =\mu_2u_2^3+\beta u_1^2u_2 & \text { in } \Omega,\\
				u_1=u_2=0 & \text { on } \partial\Omega.
			\end{cases} \eqno{(P)}
		\end{equation*}
		where $\mu_1, \mu_2>0$, $\beta>0$ and $\Omega\subset \R^N (N\le3)$ is a bounded domain with smooth boundary. We are concerned with the indefinite case, i.e., $\tau_1, \tau_2$ are greater than or equal to the principal eigenvalue of $-\Delta$ with the Dirichlet boundary datum. By delicate variational arguments, we obtain the existence of ground state solution to $(P)$, and also provide information on critical energy levels for coupling parameter $\beta$ in some ranges.
	\end{abstract}
	
	\noindent\textbf{Keywords:} Indefinite; Nonlinear Schr\"odinger system; Ground state.
	
	\noindent\textbf{Mathematics Subject Classification (2020)} 35J10, 35J47, 35J57.

	\section{Introduction}
	We are concerned with the following Schr\"odinger type elliptic system
	\begin{equation}\label{sy1}
		\begin{cases}
			-\Delta u_1-\tau_1 u_1=\mu_1u_1^3+\beta u_1u_2^2 & \text { in } \Omega,\\
			-\Delta u_2-\tau_2 u_2=\mu_2u_2^3+\beta u_1^{2}u_2 & \text { in } \Omega,\\
			u_1=u_2=0 & \text { on } \partial\Omega,
		\end{cases}
	\end{equation}
	where $\tau_i\in\R, \mu_i>0, i=1,2$, $\beta>0$ and $\Omega\subset \R^N(N\le3)$ is a bounded domain with smooth boundary $\partial \Omega$.

	In the last two decades, the elliptic system \eqref{sy1}, with various sets of parameters, has received much attention. The established results can be roughly characterized into two categories: the definite case and the indefinite case. If  either (a) $\Omega=\R^N$ and $\tau_i<0$, or (b) $\Omega\subset\R^N$ is bounded and $\tau_i<\lambda_1$, where $\lambda_1>0$ is the principal eigenvalue of $-\Delta$ in $H_0^1(\Omega)$, then the system \eqref{sy1} is definite. In contrast, as our main concerns, the indefinite system \eqref{sy1} is defined on a smooth bounded domain with $\tau_i\geq\lambda_1$. In the definite case, the associated Nehari manifold is complete and the energy functional has a lower bound on this manifold. Thus one may study the infimum of the energy functional constrained on the Nehari manifold, and the corresponding solution is known as the ground state solution. In the indefinite case, on the other hand, the ground state solution of \eqref{sy1} should be understood as the solution that achieves the minimal energy among all solutions.
	
	
	In the pioneering work \cite{lin_ground_2005}, Lin and Wei advanced the study of ground state solutions to \eqref{sy1} in the definite case. Since then, the existence of ground state solutions for various definite coupled Schr"odinger systems has been extensively studied, see \cite{ambrosetti_stading_2007,bartsch_note_2006,correia_semitrivial_2016,liu_ground_2010,soave_new_2016}.
	

	On the other hand, studies devoted to the indefinite case appear to be much fewer. The incompleteness of the Nehari manifold and a more complicated space decomposition introduce significant complexities. In \cite{szulkin_ground_nodate-1}, Szulkin and Weth proved that the indefinite scalar problem
	\begin{equation}\label{eq:scalar}
		-\Delta u - \tau_i u=\mu_i u^3,\quad u_i\in H_0^1(\Omega),\ i=1, 2,
	\end{equation}
	has a ground state solution $u_i\neq 0$, where $\tau_i\in \mathbb{R}$ and $\mu_i>0$. Consequently, the system \eqref{sy1} admits two types of semi-trivial solutions, $(u_1,0)$ and $(0,u_2)$.	Recently, in \cite{clapp_solutions_2020}, Clapp and Szulkin proved that a ground state solution exists when $\beta>0$ is sufficiently large, using a critical point theorem established by Bartolo, Benci and Fortunato \cite[Theorem 2.4]{Bartolo-etal_1983}. Husaini and Liu \cite{ali_husaini_synchronized_2022} extended the result of \cite{clapp_solutions_2020} to systems with general nonlinearities.
	
	
	In this paper, we investigate the existence of a ground state solution with both components being nontrivial. It is worth emphasizing that our method provides a specific range for $\beta$, which improves upon the result in \cite{clapp_solutions_2020}. In contrast to our recent work \cite{xst2025}, which focused on the case of small $|\beta|$, we develop a novel variational framework here to address the challenges arising from the altered geometric structure of the problem when $\beta$ is large.
	
	
	\subsection{Notations and main results}
	The norms of $L^p(\Omega)$ and $H_0^1(\Omega)$ are denoted by
	\begin{equation*}
		\|u\|_{L^p}=\left( \int_{\Omega} |u|^p  \dx\right)^\frac{1}{p},\quad \|u\|_{H^1_0}=\left( \int_{\Omega}|\nabla u|^2  \dx \right)^\frac{1}{2},
	\end{equation*} respectively.
	For  $\mathbf{u}\in \mathbf{H}:=H^1_0(\Omega)\times H^1_0(\Omega)$, we write $\mathbf{u}=(u_1,u_2)$ and define the norm
	$$\|\mathbf{u}\|=(\| u_1\|_{H_0^1}^2+ \| u_2\|_{H_0^1}^2 ) ^\frac{1}{2}.$$
	Based on the spectrum of $-\Delta-\tau_i$ in $H_0^1(\Omega)$, we adopt the following orthogonal decomposition for $i=1,2$:
	$$H^1_0(\Omega)=H_{i}^+ \oplus H_{i}^0 \oplus H_{i}^-,$$
	where the operator $-\Delta-\tau_i$ is positive definite on $H_i^+$, zero on $H_i^0$ and negative definite on $H_i^-$.
	Let $\tilde{H}_i=H^0_i\oplus H^-_i$, then we also have
	$$H^1_0(\Omega)=H_{i}^+ \oplus\tilde{H}_i,\ i=1,2.$$
	We set
	$$\mathbf{H}^+:=H_1^+\times H_2^+,\ \mathbf{H}^0:=H^0_1\times H^0_2,\ \mathbf{H}^-:=H_1^-\times H_2^-,\ \tilde{\mathbf{H}}:=\tilde{H}_1\times\tilde{H}_2.$$
	Then  any $\mathbf{u}\in\mathbf{H}$  admits a unique orthogonal decomposition
	\begin{align*}
		\mathbf{u}& =\mathbf{u}^++\mathbf{u}^0+\mathbf{u}^-\ \in \mathbf{H}^+\oplus \mathbf{H}^0\oplus \mathbf{H}^-\\
		&=\mathbf{u}^+ +\tilde{\mathbf{u}}, \ \  \ \ \   \tilde{\mathbf{u}} = \mathbf{u}^0+\mathbf{u}^-  \in  \tilde{\mathbf{H}}.
	\end{align*}
	We denote
	$$\mathbf{u}=(u_1,u_2),\ \mathbf{u}^+=(u_1^+,u_2^+),\ \mathbf{u}^-=(u_1^-,u_2^-),\ \tilde{\mathbf{u}}=(\tilde{u}_1,\tilde{u}_2).$$
	The weak solutions of \eqref{sy1} correspond to critical points of the energy functional
	$I: \mathbf{H} \rightarrow\R$,
	\begin{align*}
		I(\mathbf{u}):&=\frac{1}{2}\int_\Omega(|\nabla u_1|^2+|\nabla u_2|^2-\tau_1u_1^2-\tau_2u_2^2) \dx-
		\frac{1}{4}\int_\Omega (\mu_1u_1^4+\mu_2u_2^4+2\beta u_1^2u_2^2) \dx.
	\end{align*}
	For simplicity, we set 	$$
	J(\mathbf{u},\mathbf{v}):=J_1(u_1,v_1)+J_2(u_2,v_2)$$
	where $$ 	J_i(u_i,v_i):=\int_\Omega\nabla u_i\cdot\nabla v_i \dx-\tau_i\int_\Omega u_iv_i \dx, \ i=1, 2. $$ 	Let
	\begin{align*} 		F(\mathbf{u}):=\frac{1}{4}(\mu_1u_1^4+\mu_2u_2^4+2\beta u_1^2u_2^2),\end{align*} and
	$$f(\mathbf{u}):=\nabla F(\mathbf{u})=(\mu_1u_1^3+\beta u_1u_2^2,\mu_2u_2^3+\beta u_1^2u_2).$$
	Then we can rewrite the functional $I$ as
	$$
	I(\mathbf{u})=\frac{1}{2}J(\mathbf{u},\mathbf{u})-\int_{\Omega}F(\mathbf{u}) \dx.$$
	If $\mathbf{u}^*\in \mathbf{H}$ achieves the minimal energy   $e$ given by
	\begin{equation}\label{eq:groundEnergy}
		e:=\inf\limits_{\mathbf{u}\in \mathcal{K}}I(\mathbf{u})=I(\mathbf{u}^*),
	\end{equation} among all nontrivial critical points of $I$, where
	$\mathcal{K}=\{\mathbf{u}\in\mathbf{H} \backslash \tilde{\mathbf{H}} : I'(\mathbf{u})=0\}$, then $\mathbf{u}^*$ is called a ground state solution of \eqref{sy1}.

	To seek ground state solution of \eqref{gs}, the following constraints and infima will be used.
	\begin{enumerate}
		\item[(1)] The so-called \emph{Nehari-Pankov manifold} associated to $I$,
		\begin{equation*}
			\N:=\{\mathbf{u}\in \mathbf{H}\backslash\tilde{\mathbf{H}}:
			I'(\mathbf{u})\mathbf{u}=0,I'(\mathbf{u})\mathbf{v}=0\text{ for all } \mathbf{v}\in\tilde{\mathbf{H}}\},
		\end{equation*}
		which was first introduced by Pankov in \cite{pankov_periodic_2005}. Denote
		\begin{equation}\label{eq:level_c}
			c:=\inf\limits_{\mathbf{u}\in\N}I(\mathbf{u}).
		\end{equation}
		\item[(2)] The set of \emph{maximal points on generalized fibers},
		\begin{equation*}
			\N':=\left\{\mathbf{u}\in \mathbf{H}\backslash\tilde{\mathbf{H}} :I(\mathbf{u})\ge
			I(t\mathbf{u}+\mathbf{v})\text{ for all } t\ge0, \mathbf{v}\in\tilde{\mathbf{H}}\right\},
		\end{equation*}
		which coincides with $\N$ if $\tilde{\mathbf{H}}=\{\mathbf{0}\}$. Denote
		\begin{equation}\label{eq:level_cprime}
			c':=\inf\limits_{\mathbf{u}\in\N'}I(\mathbf{u}).
		\end{equation}
	\end{enumerate}
	{Obviously, since $I$ is an even functional, $\N'$ consists of the global maximizer of $I|_{\R\mathbf{u}\oplus \tilde{\mathbf{H}}}$, $\N'\subset \N$.} Moreover, $\N$ contains all the nontrivial critical points for $I$, so $\mathcal{K}\cup\N'\subset\N$ and
	\begin{equation}\label{cineq}
		c\le \min\{e,c'\} .
	\end{equation}
	Let $K_i$ be the set of ground state solutions of \eqref{eq:scalar}. By \cite{szulkin_ground_nodate-1}, $K_i\neq \emptyset$. Define
	\begin{equation} \label{1.7}
		\hat{\beta}_1=\inf_{U_1\in K_1}\inf _{\varphi \in H_2^+ \backslash\{0\}} \frac{J_2(\varphi,\varphi)}{\int_\Omega U_1^2 \varphi^2 \dx},
		\quad \hat{\beta}_2=\inf_{U_2\in K_2}\inf _{\varphi \in H_1^+ \backslash\{0\}} \frac{J_1(\varphi,\varphi)}{\int_\Omega U_2^2 \varphi^2 \dx}.
	\end{equation}
	
	Similar numbers have been defined by Ambrosetti and Colorado for the definite case  in \cite{ambrosetti_stading_2007}. Note that both $\hat{\beta_1}$ and $\hat{\beta_2}$ are well-defined and have positive lower bound. In fact, there exists constant $C'>0$ such that
	$$J_2(\varphi,\varphi)\ge C'\|\varphi\|_{H_0^1}^2,\quad \varphi\in H_2^+ \backslash\{0\}.$$
	Then for any $U_1\in K_1$, we have
	$$\frac{J_2(\varphi,\varphi)}{\int_\Omega U_1^2 \varphi^2 \dx}\ge\frac{C'\|\varphi\|_{H_0^1}^2}{\int_\Omega U_1^2 \varphi^2 \dx}\ge \frac{C'\|\varphi\|_{H_0^1}^2}{C^2\|U_1\|^2_{L^4}\|\varphi\|^2_{H_0^1}}=\frac{C'}{C^2\|U_1\|^2_{L^4}}>0.$$
	Since $\|U_1\|_{L^4}^2$ is independent of the choice of $U_1\in K_1$, {we have that  $\hat{\beta_1} >0$. Similarly, $\hat{\beta_2} >0$.}
	
	Let $c_{sem}$ be the least energy of the semi-trivial solutions of \eqref{sy1}, i.e., for $U_1\in K_1$, $U_2\in K_2$,
	\begin{equation}\label{eq:level_csem}
		c_{sem}:=\min\left\{I(U_1,0), I(0,U_2)\right\}.
	\end{equation}
	
	\begin{theorem}\label{gs}
		Assume that $\beta> \Lambda:=\max \left\{\hat{\beta}_1, \hat{\beta}_2\right\}$. Then the system \eqref{sy1} has a ground state solution $\mathbf{u^*}$. 	
		Moreover, it holds that
		$$ I(\mathbf{u^*})=e\le c_l\le c'<c_{sem},$$
		in which $e$, $c'$ and $ c_{sem}$ are defined in  \eqref{eq:groundEnergy}, \eqref{eq:level_cprime} and \eqref{eq:level_csem}, respectively, and
		\begin{equation}\label{eq:linkingvalue}
			c_l:=\inf\limits_{\mathbf{u}\in \mathbf{H}\backslash \tilde{\mathbf{H}}}\inf\limits_{\gamma\in \Gamma(\mathbf{u})}
			\max\limits_{\mathbf{u'}\in M(\mathbf{u})}I(\gamma(\mathbf{u'})),
		\end{equation}
		where  $$M(\mathbf{u})=\left\{\mathbf{w}=t\mathbf{u}+\mathbf{v}:\|\mathbf{w}\|\le \rho(\mathbf{u}),\ 	t\ge0 \text{ and }\mathbf{v} \in \tilde{\mathbf{H}}\right\},$$ $$\Gamma(\mathbf{u})=\left\{\gamma\in C(M(\mathbf{u}),\mathbf{H}): \gamma|_{\partial M(\mathbf{u})}=id\right\},$$
		 {and $\rho(\mathbf{u})>0$ will be defined in Lemma \ref{l-g}.}
	\end{theorem}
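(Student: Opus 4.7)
The plan is to establish the chain $e\le c_l\le c'<c_{sem}$ from right to left, and then use $e<c_{sem}$ to recover compactness and produce a ground state. The heart of the proof is the strict inequality $c'<c_{sem}$, and this is where the assumption $\beta>\Lambda$ is used.

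Without loss of generality I may assume $c_{sem}=I(U_1,0)$ for some $U_1\in K_1$. Since $\beta>\hat\beta_1$, one can pick $\varphi\in H_2^+\setminus\{0\}$ with $J_2(\varphi,\varphi)<\beta\int_\Omega U_1^2\varphi^2\,\dx$. For small $\sigma>0$, I would consider the direction $(U_1,\sigma\varphi)\in\mathbf{H}\setminus\tilde{\mathbf{H}}$ and the generalized fiber $\mathbb{R}^+(U_1,\sigma\varphi)\oplus\tilde{\mathbf{H}}$. A separate lemma, to be proved in parallel, should show that $I$ attains a unique maximum $\mathbf{u}_\star(\sigma)$ on this fiber, so that $\mathbf{u}_\star(\sigma)\in\N'$. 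At $\sigma=0$ the fiber is $\mathbb{R}^+(U_1,0)\oplus\tilde{\mathbf{H}}$ and, by the scalar Szulkin-Weth theory together with $\beta>0$, the unique maximizer is $(U_1,0)$ with value $c_{sem}$. Expanding $I$ around $(U_1,0)$, the new second-order contribution comes only from the $H_2^+$-tilt $\sigma\varphi$ and yields
\begin{equation*}
I(\mathbf{u}_\star(\sigma))=c_{sem}+\frac{\sigma^2}{2}\Bigl[J_2(\varphi,\varphi)-\beta\int_\Omega U_1^2\varphi^2\,\dx\Bigr]+o(\sigma^2);
\end{equation*}
the bracket is strictly negative by choice of $\varphi$, hence $c'\le I(\mathbf{u}_\star(\sigma))<c_{sem}$ for $\sigma$ small.

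The inequality $c_l\le c'$ follows by taking $\gamma=\mathrm{id}$ on $M(\mathbf{u})$ for any $\mathbf{u}\in\N'$: since $M(\mathbf{u})\subset\mathbb{R}^+\mathbf{u}\oplus\tilde{\mathbf{H}}$ and $\mathbf{u}$ maximizes $I$ on the latter, $\max_{M(\mathbf{u})}I\le I(\mathbf{u})$; taking infima yields $c_l\le c'$. The linking geometry quantified by Lemma \ref{l-g} (the choice of $\rho(\mathbf{u})$ making $I$ small on $\partial M(\mathbf{u})$ while $\mathbf{H}^+\cap S_\rho$ separates $I>0$) together with a classical deformation argument then produces a Palais-Smale sequence $\{\mathbf{u}_n\}$ for $I$ at level $c_l$.

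The final task is compactness and extraction of the ground state. A Nehari-Pankov-type identity (evaluating $I'(\mathbf{u}_n)$ on $\mathbf{u}_n^+-\tilde{\mathbf{u}}_n$ and on $\mathbf{u}_n$) shows that PS sequences are bounded, and the subcritical Sobolev embedding for $N\le 3$ with quartic nonlinearity gives strong convergence along a subsequence to a critical point $\mathbf{u}^\#$ with $I(\mathbf{u}^\#)=c_l$. The inequality $0<c_l<c_{sem}$ then forces $\mathbf{u}^\#$ to be neither zero nor semi-trivial, so $\mathbf{u}^\#\in\mathcal{K}$ and $e\le c_l$. Applying the same compactness analysis to a minimizing sequence in $\mathcal{K}$ at level $e\le c_l<c_{sem}$ delivers the desired ground state $\mathbf{u}^*$ with $I(\mathbf{u}^*)=e$. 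The principal obstacle lies in the first step: rigorously establishing existence and uniqueness of the fiber maximizer $\mathbf{u}_\star(\sigma)$ in the indefinite large-$\beta$ regime (where the usual strict concavity arguments on the fiber must be adapted to accommodate the positive cross term $-\frac{\beta}{2}\int u_1^2u_2^2\,\dx$), and carrying out the expansion precisely enough to isolate the decisive quadratic term $\frac{\sigma^2}{2}[J_2(\varphi,\varphi)-\beta\int U_1^2\varphi^2\,\dx]$ without contamination by lower-order $\tilde{\mathbf{H}}$-perturbations.
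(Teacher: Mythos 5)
Your overall architecture (reduce everything to the strict inequality $c'<c_{sem}$, get $c_l\le c'$ by testing with $\gamma=\mathrm{id}$ on fibers of points of $\N'$, run a deformation argument at level $c_l$, and finally minimize over $\mathcal{K}$ using the Palais--Smale condition) coincides with the paper's. The genuine gap is in your key step, the proof of $c'<c_{sem}$. You propose to bound from above the \emph{maximum} of $I$ over the whole perturbed fiber $\R^+(U_1,\sigma\varphi)\oplus\tilde{\mathbf{H}}$ and claim the expansion $I(\mathbf{u}_\star(\sigma))=c_{sem}+\tfrac{\sigma^2}{2}\bigl[J_2(\varphi,\varphi)-\beta\int_\Omega U_1^2\varphi^2\,\dx\bigr]+o(\sigma^2)$. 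This presumes that the maximizer's displacement in the $\tilde{\mathbf{H}}$-directions is $o(\sigma)$, which is exactly the ``contamination'' you flag at the end and which can actually occur. Indeed, the Hessian at $(U_1,0)$ decouples in the two components, and on the second component the relevant quadratic form is $Q_2(w)=J_2(w,w)-\beta\int_\Omega U_1^2w^2\,\dx$. Taking $w=\sigma\varphi+k\phi$ with $\phi\in H_2^0$ (nontrivial when $\tau_2$ is an eigenvalue) and optimizing over $k$ gives
\begin{equation*}
\sup_{k\in\R}Q_2(\sigma\varphi+k\phi)=\sigma^2\Bigl[J_2(\varphi,\varphi)-\beta\int_\Omega U_1^2\varphi^2\,\dx+\beta\frac{\bigl(\int_\Omega U_1^2\varphi\phi\,\dx\bigr)^2}{\int_\Omega U_1^2\phi^2\,\dx}\Bigr],
\end{equation*}
which by Cauchy--Schwarz can be as large as (nearly) $\sigma^2 J_2(\varphi,\varphi)>0$. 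Since the points $(U_1,\sigma\varphi+k\phi)$ lie in your fiber, its maximum may exceed $c_{sem}$ at second order, so the decisive sign cannot be read off from $J_2(\varphi,\varphi)-\beta\int U_1^2\varphi^2\,\dx$ alone. (The existence and uniqueness of $\mathbf{u}_\star(\sigma)$ that you defer to a ``separate lemma'' is also unavailable in the large-$\beta$ indefinite regime; the paper's Remark 1.4 shows the fiber landscape is not concave there.)

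The paper circumvents this by never estimating a fiber maximum: it shows $G'(\mathbf{U}_1)$ is surjective (Lemma \ref{l22}), so $\N$ is locally a $C^1$ manifold near $(U_1,0)$ with tangent space $\mathbf{H}(\mathbf{U}_1)^\perp$, and constructs a curve $\gamma_1\subset\N$ tangent to the single direction $(0,h_2)$; along $\gamma_1$ the Taylor expansion involves only $\langle I''(\mathbf{U}_1)(0,h_2),(0,h_2)\rangle<0$, with no interference from $\tilde{\mathbf{H}}$. It then shows $\gamma_1$ meets $\N'$ arbitrarily close to $(U_1,0)$ by a compactness argument combining Corollary \ref{maximum}, the uniqueness of the fiber maximizer at $\sigma=0$ (Lemma \ref{max}), and the implicit function theorem. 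You would need to either adopt this route or prove negativity of $Q_2$ on all of $\R\varphi\oplus\tilde H_2$ (plus the analogous control in the first component), which does not follow from $\beta>\hat\beta_1$. The remaining steps of your outline (the deformation argument, $c_l\le c'$, compactness, and extraction of the ground state) are consistent with the paper, modulo the standard but nontrivial boundedness of Palais--Smale sequences, which the paper delegates to \cite{clapp_solutions_2020}.
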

	
	\begin{remark}
		For the definite system on $\mathbb{R}^N$, the number $c_l$ is known to be a variant of the mountain pass level. Indeed, \cite[Lemma 3.2]{maia_positive_2006} establishes that $c=c_l=c'$, a value which coincides with the ground state energy level for $\beta>0$. The lower bound $\Lambda$ we obtain in Theorem \ref{gs} for the indefinite case is markedly sharper than the one implied by the existence result in \cite{clapp_solutions_2020}. This improvement stems from our refined and fundamentally different variational approach.
	\end{remark}

	Next, we present some results for a special indefinite case.
	\begin{theorem}\label{inf} 		Assume that $\tau_1=\tau_2=\lambda_1$ and $0<\beta<3\sqrt{\mu_1\mu_2}$.
		Then there holds that $\N=\N'$ and  the numbers \begin{equation} \label{crivaluesid1}
			c=e=c_l=c'
		\end{equation}  is achieved. Moreover, there exists a minimizer of \eqref{crivaluesid1}, which is a synchronized solution to the system (\ref{sy1}), i.e., a solution of the form $(t_1u, t_2u)$ with $t_1,t_2\in \R$.
	\end{theorem}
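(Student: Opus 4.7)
The plan hinges on the strict convexity of $F$ when $\beta<3\sqrt{\mu_1\mu_2}$, combined with a reduction of the ground-state problem to a scalar Szulkin--Weth problem via a synchronized ansatz. Since $\tau_1=\tau_2=\lambda_1$, one first records that $H_i^-=\{0\}$, $\tilde{\mathbf{H}}=\R\varphi_1\times\R\varphi_1$, and $J$ vanishes identically on $\tilde{\mathbf{H}}$. A direct Hessian computation yields
\[
\det\mathrm{Hess}\,F \;=\; 3\beta(\mu_1 u_1^4+\mu_2 u_2^4)+3(3\mu_1\mu_2-\beta^2)u_1^2u_2^2,
\]
which, combined with an AM--GM argument, is strictly positive on $\R^2\setminus\{0\}$ precisely when $\beta<3\sqrt{\mu_1\mu_2}$. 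Hence $F$ is strictly convex.

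To prove $\N\subseteq\N'$, fix $\mathbf{u}\in\N$ and consider $g(s,\mathbf{v}):=I(s\mathbf{u}+\mathbf{v})$ on $[0,\infty)\times\tilde{\mathbf{H}}$. Since $J$ vanishes on $\tilde{\mathbf{H}}$ and the decomposition $\mathbf{H}=\mathbf{H}^+\oplus\tilde{\mathbf{H}}$ is $J$-orthogonal, one has $g(s,\mathbf{v})=\tfrac{s^2}{2}J(\mathbf{u},\mathbf{u})-\int_\Omega F(s\mathbf{u}+\mathbf{v})\,\dx$; strict convexity of $F$ makes $\mathbf{v}\mapsto g(s,\mathbf{v})$ strictly concave with a unique maximizer, and the Nehari--Pankov condition identifies $\mathbf{v}=0$ as the maximizer when $s=1$. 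By the $4$-homogeneity of $F$, the reduced function $\psi(s):=\max_{\mathbf{v}} g(s,\mathbf{v})$ equals $\tfrac{s^2}{2}J(\mathbf{u},\mathbf{u})-s^4\int_\Omega F(\mathbf{u})\,\dx$, a downward parabola in $s^2$ whose unique maximum is attained at $s=1$ by the Nehari relation $J(\mathbf{u},\mathbf{u})=4\int F(\mathbf{u})\,\dx$. Thus $\mathbf{u}\in\N'$, so $\N=\N'$ and $c=c'$. The general chain $c\leq c_l\leq c'$ (the upper bound from the identity map as a candidate in the definition of $c_l$, the lower bound from a linking/intersection argument) then gives $c=c_l=c'$.

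To exhibit a synchronized minimizer, set $\Phi(r):=\mu_1 r^2+\mu_2(1-r)^2+2\beta r(1-r)$, $M:=\max_{r\in[0,1]}\Phi(r)$, and let $r^*\in[0,1]$ attain $M$. By \cite{szulkin_ground_nodate-1} the scalar problem $-\Delta w-\lambda_1 w=Mw^3$ admits a ground state $W\neq 0$ satisfying $\int_\Omega W^3\varphi_1\,\dx=0$ and $A(W)=M\int_\Omega W^4\,\dx$, where $A(w):=\int_\Omega(|\nabla w|^2-\lambda_1 w^2)\,\dx$; its energy is $E_M=A(W)/4=m/(4M)$, with $m:=\inf\{A(w)^2/\|w\|_{L^4}^4:w\in H_0^1(\Omega)\setminus\R\varphi_1,\ \int w^3\varphi_1\,\dx=0\}$. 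Choosing $(t_1,t_2)\in\R^2$ with $t_1^2/(t_1^2+t_2^2)=r^*$ and $t_1^2+t_2^2=1$, a direct calculation verifies $(t_1W,t_2W)\in\N$ with $I(t_1W,t_2W)=E_M$; hence $c\leq E_M$.

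The main obstacle is the matching lower bound $c\geq E_M$. The proposed approach compares $I$ with the auxiliary rotationally symmetric functional
\[
I_\star(\mathbf{u}) \;:=\; \tfrac{1}{2}J(\mathbf{u},\mathbf{u})-\tfrac{M}{4}\int_\Omega(u_1^2+u_2^2)^2\,\dx,
\]
corresponding to the choice $\mu_1=\mu_2=\beta=M$. The pointwise bound $F(u_1,u_2)\leq\tfrac{M}{4}(u_1^2+u_2^2)^2$ (equivalent to $\Phi\leq M$ written in polar coordinates on the $(u_1,u_2)$-plane) yields $I\geq I_\star$ on $\mathbf{H}$; passing to the $\inf$--$\max$ representation of $c'$ gives $c=c'\geq c_\star'$, where $c_\star'$ is the analogous level for $I_\star$. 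Since $I_\star$ depends on $(u_1,u_2)$ only through the rotation-invariant quantities $J(\mathbf{u},\mathbf{u})$ and $\int(u_1^2+u_2^2)^2$, and its Euler--Lagrange equation forces both components to lie in the kernel of the common Schr\"odinger operator $-\Delta-\lambda_1-M(u_1^2+u_2^2)$, a non-degeneracy argument for this kernel reduces any minimizer of $I_\star$ to synchronized form, yielding $c_\star'=m/(4M)=E_M$. Therefore $c=E_M$ and $(t_1W,t_2W)$ realizes the infimum. A standard Lagrange-multiplier / deformation argument then shows this minimizer is a critical point of $I$ on $\mathbf{H}$, so $(t_1W,t_2W)\in\mathcal{K}$, giving $e\leq c$ and, combined with the general inequality $c\leq e$, completing the chain $c=e=c_l=c'$.
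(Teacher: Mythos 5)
Your reduction is correct up to and including $\N=\N'$ and the upper bound: the strict convexity of $F$ for $0<\beta<3\sqrt{\mu_1\mu_2}$ (your Hessian computation plus AM--GM is right), combined with the facts that $J$ vanishes on $\tilde{\mathbf{H}}$ and is diagonal with respect to $\mathbf{H}^+\oplus\tilde{\mathbf{H}}$, does give that every $\mathbf{u}\in\N$ maximizes $I$ on $\hat{\mathbf{H}}(\mathbf{u})$. This is a cleaner route to $\N=\N'$ than the paper's, which instead proves uniqueness of the fiber maximizer by showing every critical point of $I|_{\mathbf{u}+\tilde{\mathbf{H}}}$ is a local maximum and excluding a second one via a mountain-pass argument (Lemmas \ref{unique1}--\ref{unique2}). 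Your explicit candidate $(t_1W,t_2W)$ and the inequality $c\le E_M=S^2/(4M)$ also match the paper's Lemma \ref{at2}.

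The gap is in the matching lower bound, which is the core of the theorem. You reduce to the symmetric functional $I_\star$ (the comparison $c'\ge c_\star'$ via the inf--max representation is fine), but then assert that any minimizer of $c_\star'$ is synchronized because both components lie in $\ker(-\Delta-\lambda_1-M(u_1^2+u_2^2))$ and invoke an unproven ``non-degeneracy argument.'' Proportionality of the components requires this kernel to be one-dimensional, and there is no reason for that here: testing the operator against $\varphi_1$ gives a strictly negative Rayleigh quotient, so $0$ is \emph{not} the principal eigenvalue of $-\Delta-\lambda_1-MV$ and its simplicity cannot be inferred; the components are in fact sign-changing (they satisfy $\int u_i^3\varphi_1\,\dx=0$), so they sit in a higher, possibly degenerate eigenspace. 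Moreover, even granting proportionality $(u_1,u_2)=(t_1w,t_2w)$, you still must show that $w$ realizes the scalar infimum $S$ (i.e., that the constrained maximality over $\tilde{\mathbf{H}}$ for the system descends to the Szulkin--Weth condition for the scalar problem) before you can conclude $c_\star'=m/(4M)$; this step is also missing. The paper avoids both difficulties entirely: in Lemma \ref{cp} it projects out the $\varphi_1$-components via $\hat u_i=u_i+k(u_i)\varphi_1$, bounds the cross term by Cauchy--Schwarz, $\int\hat u_1^2\hat u_2^2\le\|\hat u_1\|_{L^4}^2\|\hat u_2\|_{L^4}^2$, and uses the scalar characterization of $S$ componentwise, with synchronization read off from the equality case of Cauchy--Schwarz. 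You would need to supply an argument of this type (or a genuine proof of kernel simplicity, which I do not see) to close the proof; as written, $c'\ge E_M$, and hence the attainment of the common level and the synchronized form of the minimizer, are not established.
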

	\begin{remark}\label{rem:1-4}  
		We remark that the equality \(\mathcal{N} = \mathcal{N}'\) also holds under the following conditions:
		\begin{enumerate}
			\item[(1)] \(\tau_1, \tau_2 \in \mathbb{R}\) and \(0 < \beta < \sqrt{\mu_1 \mu_2}\);
			\item[(2)] either \(\tau_1 < \lambda_1\) or \(\tau_2 < \lambda_1\), and \(\beta > 0\);
			\item[(3)] \(\tau_1 < \lambda_1\), \(\tau_2 < \lambda_1\), and \(\beta \in \mathbb{R}\).
		\end{enumerate}
		It should be pointed out that without any restrictions on \(\beta\), the equality \(\mathcal{N} = \mathcal{N}'\) may fail. To illustrate this, we present an example where \(\mathcal{N}' \subsetneq \mathcal{N}\).
		
		Consider the case \(\tau_1 = \tau_2 = \lambda_1\) and \(\beta > \max\{\mu_1, \mu_2\}\). In this setting, system \eqref{sy1} admits a synchronized solution
		\[
		(\alpha_1 \omega, \alpha_2 \omega) := \left( \sqrt{\frac{\mu_2 - \beta}{\mu_1 \mu_2 - \beta^2}} \, \omega, \sqrt{\frac{\mu_1 - \beta}{\mu_1 \mu_2 - \beta^2}} \, \omega \right),
		\]
		where \(\omega\) is the ground state of the scalar problem
		\[
		-\Delta u - \lambda_1 u = u^3, \quad u \in H^1_0(\Omega).
		\]
		It is well-known that \((\alpha_1 \omega, \alpha_2 \omega) \in \mathcal{N}\).
		Let \(\varphi_1\) be the eigenfunction of \(-\Delta\) corresponding to the principal eigenvalue \(\lambda_1\). By the unique continuation principle, \(\varphi_1 \neq 0\) almost everywhere, and hence
		\[
		\int_{\Omega} \omega^2 \varphi_1^2 \, dx > 0.
		\]
		Observe that as \(\beta \to +\infty\), there hold $\alpha_1, \alpha_2 \to 0$ and $\sqrt{\beta} \, \alpha_1, \sqrt{\beta} \, \alpha_2 \to 1$.
		Consequently,
		\begin{align*}
			&\bigl\langle I''(\alpha_1 \omega, \alpha_2 \omega)(\varphi_1, -\varphi_1), (\varphi_1, -\varphi_1) \bigr\rangle \\
			&\quad = -3 \int_{\Omega} (\mu_1 \alpha_1^2 + \mu_2 \alpha_2^2) \omega^2 \varphi_1^2 \, dx - \beta \int_{\Omega} (\alpha_1^2 + \alpha_2^2 - 4 \alpha_1 \alpha_2) \omega^2 \varphi_1^2 \, dx \\
			&\quad \to 2 \int_{\Omega} \omega^2 \varphi_1^2 \, dx > 0, \quad \text{as } \beta \to +\infty.
		\end{align*}
		Therefore, for \(\beta > 0\) sufficiently large,
		\[
		\bigl\langle I''(\alpha_1 \omega, \alpha_2 \omega)(\varphi_1, -\varphi_1), (\varphi_1, -\varphi_1) \bigr\rangle > 0,
		\]
		which implies \((\alpha_1 \omega, \alpha_2 \omega) \notin \mathcal{N}'\).
	\end{remark}
	
	While, when $\beta>0$ is  large enough, the conclusion is quite different, the equality \eqref{crivaluesid1} no longer hold.  More precisely, we have the following theorm.
	\begin{theorem}\label{cpr}
		Assume that $\tau_1=\tau_2=\lambda_1$ and $\beta>0$ is large enough. Then the numbers defined by (1.3), (1.4) and (1.5) satisfy $c\le e<c'$.
	\end{theorem}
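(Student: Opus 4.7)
The inequality $c\le e$ is immediate from \eqref{cineq}, so the substance of the theorem is $e<c'$, and my plan is to obtain it by a direct comparison: exhibit an explicit critical point whose energy shrinks like $1/\beta$, and show that $c'$ is bounded below by a larger constant times $1/\beta$ for $\beta$ large.

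For the upper bound on $e$, I would invoke the synchronized solution $(\alpha_1\omega,\alpha_2\omega)$ constructed in Remark \ref{rem:1-4}: for $\beta>\max\{\mu_1,\mu_2\}$ it is a nontrivial solution of \eqref{sy1}, hence lies in $\mathcal{K}$. Since $F$ is $4$-homogeneous, the Nehari identity gives $I=\tfrac14 J$ at any critical point, and testing the scalar equation against $\omega$ yields $J_1(\omega,\omega)=\int_\Omega\omega^4\,dx$. Therefore
\[
e\le I(\alpha_1\omega,\alpha_2\omega)=\tfrac{1}{4}(\alpha_1^2+\alpha_2^2)\int_\Omega\omega^4\,dx.
\]
Since $\alpha_1^2+\alpha_2^2=(\mu_1+\mu_2-2\beta)/(\mu_1\mu_2-\beta^2)\sim 2/\beta$ as $\beta\to\infty$, this upper bound is asymptotically $\int_\Omega\omega^4\,dx/(2\beta)$.

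For the lower bound on $c'$, I would parametrize $\mathcal{N}'$ through its positive part: for $\mathbf{u}^+\in\mathbf{H}^+\setminus\{0\}$ set $\widetilde I(\mathbf{u}^+)=\max_{t\ge 0,\mathbf{v}\in\tilde{\mathbf{H}}}I(t\mathbf{u}^++\mathbf{v})$, so that $c'=\inf_{\mathbf{u}^+\ne 0}\widetilde I(\mathbf{u}^+)$. In the present setting $\tilde{\mathbf{H}}=\mathbb{R}\varphi_1\times\mathbb{R}\varphi_1$, so writing $\mathbf{v}=(s_1\varphi_1,s_2\varphi_1)$ the inner maximization becomes an explicit quartic-polynomial problem in $(t,s_1,s_2)$ (using $J_i(\varphi_1,\cdot)=0$). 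I would carry out this optimization and then infimize over $\mathbf{u}^+$ on the unit sphere of $\mathbf{H}^+$, aiming at a bound $c'\ge K(\beta)/\beta$ with $\liminf_{\beta\to\infty}K(\beta)>\tfrac12\int_\Omega\omega^4\,dx$.

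The main obstacle is this last uniform estimate. Semi-trivial directions $\mathbf{u}^+=(u_1^+,0)$ or $(0,u_2^+)$ give the classical scalar bound $\widetilde I\ge c_{sem}=O(1)$, so they are harmless; the delicate regime is the genuinely two-component directions, and in particular the synchronized one $(\alpha_1\omega^+,\alpha_2\omega^+)$. There the strict gap should come from Remark \ref{rem:1-4}: the computation $\langle I''(\alpha_1\omega,\alpha_2\omega)(\varphi_1,-\varphi_1),(\varphi_1,-\varphi_1)\rangle>0$ for large $\beta$ shows that the synchronized critical point is a strict saddle, not a maximum, on its own fiber, so that the fiber maximum exceeds the synchronized value by a definite amount. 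Quantifying this gap, extending it to a neighborhood of the synchronized direction, and verifying that the infimum defining $c'$ is attained near (or tends to) this direction rather than leaking elsewhere, is the crux of the argument that I expect to be most demanding.
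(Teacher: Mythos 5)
Your reduction of the theorem to the strict inequality $e<c'$, and your upper bound $e\le I(\alpha_1\omega,\alpha_2\omega)=\tfrac14(\alpha_1^2+\alpha_2^2)\int_\Omega\omega^4\,\dx$ via the synchronized critical point, are both correct and consistent with the paper. You have also correctly identified the mechanism that ultimately drives the result: the computation in Remark \ref{rem:1-4} showing that for large $\beta$ the synchronized solution is not a maximum on its fiber, hence $(\alpha_1\omega,\alpha_2\omega)\notin\N'$.

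However, the lower bound on $c'$ --- the entire substance of the theorem --- is left as a plan rather than a proof, and the plan as stated has a genuine gap. You propose to show $c'\ge K(\beta)/\beta$ with $\liminf_\beta K(\beta)>\tfrac12\int_\Omega\omega^4\,\dx$ by maximizing explicitly on each fiber and then infimizing over all directions $\mathbf{u}^+\in\mathbf{H}^+$; but controlling that infimum uniformly over \emph{all} two-component directions (your ``no leaking elsewhere'' step) is precisely what needs to be proved, and a direct quantitative attack on it is not obviously tractable. The paper avoids this entirely by a soft argument: it introduces $S'=\inf_{\N'}J(\mathbf{u},\mathbf{u})/(\int_\Omega f(\mathbf{u})\cdot\mathbf{u}\,\dx)^{1/2}$, notes that $c'=\tfrac14 S'^2$ (since $J=\int f\cdot\mathbf{u}=4I$ on $\N'$), proves that $S'$ is \emph{attained} (Lemma \ref{at1}), and proves $S'\ge \inf_{t_1,t_2>0}h(t_1,t_2)S$ together with the rigidity statement that equality forces a minimizer with proportional components (Lemma \ref{cp}). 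Such a minimizer would have to be the synchronized critical point, contradicting Remark \ref{rem:1-4}; hence $S'>\inf h\cdot S$ strictly, while $e\le\tfrac14(\inf h\cdot S)^2$, and no asymptotic gap estimate is ever needed. To complete your argument along your own lines you would either have to supply the uniform lower bound you flag as ``most demanding,'' or adopt something like the attainment-plus-rigidity route; as written, the proof is incomplete at its decisive step.
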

	
	This paper is organized as follows. In Section 2, we establish a few of lemmas to find a linking solution on the level $c_l$ and prove Theorem \ref{gs}. In Section 3, we give the proofs for Theorems \ref{inf} and \ref{cpr}, which reveal the influence of the coupling parameter $\beta$. We always assume $\beta>0$ hereafter.	
	
	\section{Existence of fully nontrivial ground state solution}

	We shall prove the existence of a solution to \eqref{sy1} by linking-type arguments.  For a given $\mathbf{u}\in\mathbf{H}$, we construct the subspace $\mathbf{H}(\mathbf{u})=\R\mathbf{u}\oplus \tilde{\mathbf{H}}$ and the convex subset  $\hat{\mathbf{H}}(\mathbf{u})=\R^+ \mathbf{u}\oplus \tilde{\mathbf{H}}$ where $\R^+=[0,+\infty)$. The following lemma is essential to verify the linking geometry. We point out here that the scalar case of this lemma has been proved in \cite[Lemma 2.5]{szulkin_ground_nodate-1}.
	\begin{lemma} \label{compact} Let $E\subset \mathbf{H}^+\backslash\{\mathbf{0}\}$ be  a compact set. Then  there exists a $R>0$ such that
		\begin{equation*}
			\sup_{\mathbf{u}\in E}\sup_{\mathbf{w}\in \mathbf{H}(\mathbf{u})\backslash B_R(\mathbf{0})} I(\mathbf{w})\leq 0.
		\end{equation*}
	\end{lemma}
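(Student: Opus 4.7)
The plan is to argue by contradiction, using a renormalization (blow-down) scheme built on the $J$-orthogonal splitting $\mathbf{H} = \mathbf{H}^+ \oplus \tilde{\mathbf{H}}$. Suppose no such $R$ exists; then for each $n\in\mathbb{N}$ there exist $\mathbf{u}_n\in E$ and $\mathbf{w}_n = t_n\mathbf{u}_n + \tilde{\mathbf{v}}_n \in \mathbf{H}(\mathbf{u}_n)$, with $t_n\in\mathbb{R}$ and $\tilde{\mathbf{v}}_n\in\tilde{\mathbf{H}}$, such that $\|\mathbf{w}_n\|>n$ and $I(\mathbf{w}_n)>0$. By compactness of $E$, up to a subsequence $\mathbf{u}_n\to\mathbf{u}_0\in E\subset\mathbf{H}^+\setminus\{\mathbf{0}\}$ strongly in $\mathbf{H}$. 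Introduce $\bar{\mathbf{w}}_n := \mathbf{w}_n/\|\mathbf{w}_n\| = s_n\mathbf{u}_n + \tilde{\mathbf{h}}_n$ with $s_n := t_n/\|\mathbf{w}_n\|$ and $\tilde{\mathbf{h}}_n := \tilde{\mathbf{v}}_n/\|\mathbf{w}_n\|\in\tilde{\mathbf{H}}$. Since $\|\bar{\mathbf{w}}_n\|^2 = s_n^2\|\mathbf{u}_n\|^2 + \|\tilde{\mathbf{h}}_n\|^2 = 1$ and $\|\mathbf{u}_n\|\to\|\mathbf{u}_0\|>0$, both $|s_n|$ and $\|\tilde{\mathbf{h}}_n\|$ are bounded.

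Using $J(\mathbf{u}_n,\tilde{\mathbf{v}}_n)=0$ (as $\mathbf{u}_n\in\mathbf{H}^+$, $\tilde{\mathbf{v}}_n\in\tilde{\mathbf{H}}$), the inequality $I(\mathbf{w}_n)>0$ becomes, after dividing by $\|\mathbf{w}_n\|^2$ and invoking the $4$-homogeneity of $F$,
\begin{equation*}
\|\mathbf{w}_n\|^2\int_\Omega F(\bar{\mathbf{w}}_n)\,\dx < \tfrac12\bigl[s_n^2 J(\mathbf{u}_n,\mathbf{u}_n) + J(\tilde{\mathbf{h}}_n,\tilde{\mathbf{h}}_n)\bigr].
\end{equation*}
The right-hand side is uniformly bounded while $\|\mathbf{w}_n\|^2\to\infty$, and since $\mu_1,\mu_2,\beta>0$ give $F(\mathbf{u})\ge\tfrac14\min(\mu_1,\mu_2)(u_1^4+u_2^4)$, this forces $\bar{w}_{1,n},\bar{w}_{2,n}\to 0$ in $L^4(\Omega)$.

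Extract a further subsequence with $\bar{\mathbf{w}}_n\rightharpoonup\bar{\mathbf{w}}$ in $\mathbf{H}$; the compact embedding $H^1_0(\Omega)\hookrightarrow L^4(\Omega)$ (valid for $N\le 3$) identifies the weak limit with the vanishing $L^4$-limit, so $\bar{\mathbf{w}}=0$. Along a subsequence $s_n\to s_0$, and strong convergence $\mathbf{u}_n\to\mathbf{u}_0$ then yields $\tilde{\mathbf{h}}_n = \bar{\mathbf{w}}_n - s_n\mathbf{u}_n \rightharpoonup -s_0\mathbf{u}_0$. The key structural input is that only finitely many Dirichlet eigenvalues of $-\Delta$ on $\Omega$ lie in $(-\infty,\tau_i]$, whence $\tilde{\mathbf{H}}$ is finite-dimensional and in particular closed; therefore $-s_0\mathbf{u}_0\in\tilde{\mathbf{H}}\cap\mathbf{H}^+=\{\mathbf{0}\}$, forcing $s_0=0$. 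Finite-dimensionality also upgrades $\tilde{\mathbf{h}}_n\rightharpoonup 0$ to $\tilde{\mathbf{h}}_n\to 0$ strongly, so $\|\bar{\mathbf{w}}_n\|^2 = s_n^2\|\mathbf{u}_n\|^2 + \|\tilde{\mathbf{h}}_n\|^2\to 0$, contradicting $\|\bar{\mathbf{w}}_n\|=1$.

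The main obstacle I anticipate is coordinating the two distinct compactness ingredients on the renormalized sequence: compactness of $E$ (which secures a nontrivial strong limit in the $\mathbf{u}$-direction and thereby prevents the $s_n$-component from absorbing all the mass) and finite-dimensionality of $\tilde{\mathbf{H}}$ (which converts the weak limit in the $\tilde{\mathbf{h}}$-direction into a strong one). Neither alone closes the blow-up; only their combination, fed by the $L^4$-decay extracted from the coercivity of $F$ (which crucially uses $\beta>0$ so the cross-term $u_1^2u_2^2$ does not fight the quartic dominance), yields the contradiction.
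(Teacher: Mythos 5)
Your proof is correct and follows essentially the same strategy as the paper's: argue by contradiction, normalize $\mathbf{w}_n$, and use the compactness of $E$ together with the finite-dimensionality of $\tilde{\mathbf{H}}$ to control the normalized sequence $\bar{\mathbf{w}}_n=s_n\mathbf{u}_n+\tilde{\mathbf{h}}_n$. The only difference is the closing step: the paper passes to a nonzero strong limit and uses Fatou's lemma to show the quartic term makes $I(\mathbf{w}_n)/\|\mathbf{w}_n\|^2\to-\infty$, whereas you read $I(\mathbf{w}_n)>0$ in the opposite direction to force $L^4$-vanishing of $\bar{\mathbf{w}}_n$ and hence a contradiction with $\|\bar{\mathbf{w}}_n\|=1$; both endgames are valid.
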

	\begin{proof}
		If not, then there are sequences $\{\mathbf{u}_n\}\subset E$ and $\{\mathbf{w}_n\}\subset \mathbf{H}(\mathbf{u}_n)$ such that
		$$ I(\mathbf{w}_n)>0, \ \ \  \| \mathbf{w}_n\|\rightarrow \infty, \ n\to\infty.$$  Let  $$\mathbf{v}_n=\frac{\mathbf{w}_n}{\|\mathbf{w}_n\|}=(v_{n,1}, v_{n,2}) = (t_nu_{n,1}+\tilde{v}_{n,1},t_nu_{n,2}+\tilde{v}_{n,2}), \ \ \tilde{\mathbf{v}}_n=(\tilde{v}_{n,1}, \tilde{v}_{n, 2}).$$
		Since $E$ is a compact set, there exists $C_1>0$ such that
		$\|\mathbf{u}_n\|\ge C_1>0$. There holds
		$$\max\{t_n^2C_1^2,\|\tilde{\mathbf{v}}_n\|^2\}\le\max\{t_n^2\|\mathbf{u}_n\|^2,\|\tilde{\mathbf{v}}_n\|^2\}\le \|\mathbf{v}_n\|^2= 1. $$ Hence the sequences $\{t_n\}$, $\{\tilde{v}_{n,1}\}$ and $\{\tilde{v}_{n,2}\}$ are bounded, up to subsequences, we get $t_{n}\rightarrow t_{0}$ and $u_{n,i}\rightarrow u_i$, $\tilde{v}_{n,i}\rightarrow \tilde{v}_{0,i}$ in $H_0^1(\Omega)$ as $n\rightarrow \infty$ for $i=1,2$.  Set  $\mathbf{v_0}=(v_{0,1},v_{0,2})$, where $v_{0,i}=t_{0}u_{i}+\tilde{v}_{0,i}$.
		Then $\mathbf{v}_n=(v_{n,1}, v_{n,2}) \to \mathbf{v}_0$ in $\mathbf{H}$.  Consequently, $\| \mathbf{v}_0\|=1$. Without loss of generality, we assume $\| v_{0,1}\|_{H^1_0}\ge\| v_{0,2}\|_{H^1_0}$. Then $v_{0,1} \ne 0$ and $|w_{n,1}(x)|\rightarrow \infty$ if $v_{0,1}(x)\neq 0$.  By Fatou's lemma, we have
		\begin{align*}
			\frac{1}{4}\int_\Omega\left(\frac{\mu_1w_{n,1}^4+\mu_2w_{n,2}^4+2\beta w_{n,1}^2w_{n,2}^2}{\| \mathbf{w}_n\|^2}\right) \dx\ge \frac{\mu_1}{4}\int_\Omega\frac{w_{n,1}^4}{\| \mathbf{w}_n\|^2} \dx=\frac{\mu_1}{4}\int_\Omega w_{n,1}^2v_{n,1}^2 \dx\rightarrow \infty.
		\end{align*}
		Dividing $I(\mathbf{w}_n)$ by $\| \mathbf{w}_n\|^2$ and sending $n\to \infty$, we derive
		\begin{align*}
			0< \frac{I(\mathbf{w}_n)}{\| \mathbf{w}_n\|^2}&=\frac{1}{2}J(\mathbf{v}_n,\mathbf{v}_n)-\frac{1}{4}\int_\Omega\left(\frac{\mu_1w_{n,1}^4+\mu_2w_{n,2}^4+2\beta w_{n,1}^2w_{n,2}^2}{\| \mathbf{w}_n\|^2}\right) \dx
			\\&\le \frac{1}{2}\max\left\{1+\frac{|\tau_1|}{\lambda_1},1+\frac{|\tau_2|}{\lambda_1}\right\}-\frac{1}{4}\int_\Omega\left(\frac{\mu_1w_{n,1}^4+\mu_2w_{n,2}^4+2\beta w_{n,1}^2w_{n,2}^2}{\| \mathbf{w}_n\|^2}\right) \dx
			\\&\rightarrow -\infty,
		\end{align*}
		which is a contradiction.
	\end{proof}
	
	\begin{corollary}\label{maximum}
		For every $\mathbf{u}\in \mathbf{H}\backslash\tilde{\mathbf{H}}$, the set $\N'\cap \hat{\mathbf{H}}(\mathbf{u})$
		contains one point $\hat{\mathbf{u}}$ which is a global maximum point of $I|_{\hat{\mathbf{H}}(\mathbf{u})}$ and $I(\hat{\mathbf{u}})>0$.
	\end{corollary}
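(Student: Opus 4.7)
The plan is to reduce the maximization to a compact subset of a finite-dimensional space, extract a maximizer by continuity, and then verify that this maximizer sits inside $\N'$ via the half-space it generates.

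First I would reduce to the case $\mathbf{u}\in\mathbf{H}^+\setminus\{\mathbf{0}\}$. Since $\mathbf{u}\notin\tilde{\mathbf{H}}$, the component $\mathbf{u}^+$ is nonzero, and the identity $\hat{\mathbf{H}}(\mathbf{u})=\R^+\mathbf{u}\oplus\tilde{\mathbf{H}}=\R^+\mathbf{u}^+\oplus\tilde{\mathbf{H}}$ (absorbing $t\tilde{\mathbf{u}}$ into $\tilde{\mathbf{H}}$ on the $t>0$ slice, while the $t=0$ slice is $\tilde{\mathbf{H}}$ either way) shows one may replace $\mathbf{u}$ by $\mathbf{u}^+$. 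The structural remark driving everything is that, because $\Omega$ is bounded, $-\Delta$ has purely discrete spectrum in $H_0^1(\Omega)$, so each $\tilde{H}_i$ is finite-dimensional; hence $\tilde{\mathbf{H}}$ and $\mathbf{H}(\mathbf{u})=\R\mathbf{u}\oplus\tilde{\mathbf{H}}$ are finite-dimensional.

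Next I would assemble three facts to set up existence of the maximum. (a) On $\tilde{\mathbf{H}}$ one has $J(\cdot,\cdot)\le 0$ while $\int_\Omega F\ge 0$, so $I\le 0$. (b) Since $J(\mathbf{u},\mathbf{u})>0$ for $\mathbf{u}\in\mathbf{H}^+\setminus\{\mathbf{0}\}$,
$$I(s\mathbf{u})=\frac{s^2}{2}J(\mathbf{u},\mathbf{u})-s^4\int_\Omega F(\mathbf{u})\dx>0\quad\text{for small } s>0.$$
(c) Lemma \ref{compact} applied to the compact singleton $E=\{\mathbf{u}\}$ furnishes $R>0$ with $I\le 0$ on $\mathbf{H}(\mathbf{u})\setminus B_R(\mathbf{0})$. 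Together, (a)--(c) confine any maximizing sequence for $I|_{\hat{\mathbf{H}}(\mathbf{u})}$ to the set $\overline{B_R(\mathbf{0})}\cap\hat{\mathbf{H}}(\mathbf{u})$, which is closed and bounded in a finite-dimensional space, hence compact. Continuity of $I$ then produces a maximizer $\hat{\mathbf{u}}$, and (b) ensures $I(\hat{\mathbf{u}})>0$.

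Finally, to check $\hat{\mathbf{u}}\in\N'$, I would combine (a) with $I(\hat{\mathbf{u}})>0$ to conclude $\hat{\mathbf{u}}\notin\tilde{\mathbf{H}}$, so $\hat{\mathbf{u}}=t_0\mathbf{u}+\tilde{\mathbf{w}}$ with $t_0>0$ and $\tilde{\mathbf{w}}\in\tilde{\mathbf{H}}$. Thus $\hat{\mathbf{H}}(\hat{\mathbf{u}})=\R^+\hat{\mathbf{u}}\oplus\tilde{\mathbf{H}}=\R^+\mathbf{u}\oplus\tilde{\mathbf{H}}=\hat{\mathbf{H}}(\mathbf{u})$, and the global maximality of $\hat{\mathbf{u}}$ on $\hat{\mathbf{H}}(\mathbf{u})=\hat{\mathbf{H}}(\hat{\mathbf{u}})$ is exactly the defining condition for $\N'$. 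The step one might fear---extracting a maximizer from a possibly non-coercive maximizing sequence---turns out to be painless because Lemma \ref{compact} already supplies the a priori norm bound and $\tilde{\mathbf{H}}$ is finite-dimensional, so the real content of the corollary is carried by Lemma \ref{compact}; the only remaining subtlety is the bookkeeping needed to identify $\hat{\mathbf{H}}(\hat{\mathbf{u}})$ with $\hat{\mathbf{H}}(\mathbf{u})$ so that the global maximum translates to membership in $\N'$.
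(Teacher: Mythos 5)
Your proposal is correct and follows essentially the same route as the paper: apply Lemma \ref{compact} to the singleton $\{\mathbf{u}^+\}$ to kill $I$ outside a ball, use $I(s\mathbf{u}^+)>0$ for small $s$ to make the supremum positive, and extract a maximizer from the closed bounded (hence compact, by finite-dimensionality of $\mathbf{H}(\mathbf{u})$) set. The only difference is that you spell out two details the paper leaves implicit — that $I\le 0$ on $\tilde{\mathbf{H}}$ forces the maximizer off $\tilde{\mathbf{H}}$, and that $\hat{\mathbf{H}}(\hat{\mathbf{u}})=\hat{\mathbf{H}}(\mathbf{u})$ so global maximality translates into membership in $\N'$ — which is a welcome, not a divergent, addition.
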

	\begin{proof}
		Fix $\mathbf{u}\in \mathbf{H}\backslash\tilde{\mathbf{H}}$, then $\mathbf{u}^+\ne 0$. We take  $E=\{\mathbf{u}^+\} \subset \mathbf{H}^+\backslash\{\mathbf{0}\}$ in Lemma \ref{compact}, then there exists a $R>0$ such that
		\begin{equation*}
			\sup_{\mathbf{v}\in \mathbf{H}(\mathbf{u})\backslash B_R(\mathbf{0})} I(\mathbf{v})\leq 0.
		\end{equation*}
		It is easy to see that  $I(s\mathbf{u}^+)>0$ when $s>0$ is small enough. Thus
		$$0<\sup\limits_{\hat{\mathbf{H}}(\mathbf{u})} I<\infty.$$
		Since $\hat{\mathbf{H}}(\mathbf{u})\cap B_R(\mathbf{0})$ is a bounded closed subset in a finite dimensional space, $\sup\limits_{\hat{\mathbf{H}}(\mathbf{u})}I$ is achieved at some point $\hat{\mathbf{u}} \in \N'\cap \hat{\mathbf{H}}(\mathbf{u})$.
	\end{proof}
	
	The next lemma concerns with the other requirement of a linking type geometry.
	{\begin{lemma}\label{l-g} For $r>0$ is small and for any $ \mathbf{u}\in \mathbf{H}\backslash \tilde{\mathbf{H}}$,
			there exists $\rho(\mathbf{u})>r$ such that
			 {$$\inf_{  S_r^+} I>0=\inf\limits_{\mathbf{u}\in \mathbf{H}\backslash\tilde{\mathbf{H}}} \max\limits_{\mathbf{w}\in\partial M(\mathbf{u})}I(\mathbf{w}),$$}
			where $S_r^+=\{\mathbf{z}\in \mathbf{H}^+:\|\mathbf{z}\|=r\}$ and
			$$	M(\mathbf{u})=\{\mathbf{w}=t\mathbf{u}+\mathbf{v}:\|\mathbf{w}\|\le \rho(\mathbf{u}),\ 	t\ge0\text{ and }\mathbf{v}\in \tilde{\mathbf{H}}\}. $$
		\end{lemma}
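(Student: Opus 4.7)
The plan is to verify the two assertions separately and then combine them. For the positivity on $S_r^+$: since $-\Delta-\tau_i$ is positive definite on $H_i^+$, there exists $\alpha>0$ such that $J(\mathbf{z},\mathbf{z}) \ge \alpha \|\mathbf{z}\|^2$ for every $\mathbf{z}\in\mathbf{H}^+$. Combining with the Sobolev embedding $H_0^1(\Omega)\hookrightarrow L^4(\Omega)$ (valid for $N\le 3$), which yields $\int_\Omega F(\mathbf{z})\,\dx \le C \|\mathbf{z}\|^4$, one obtains $I(\mathbf{z})\ge \tfrac{\alpha}{2}\|\mathbf{z}\|^2 - C\|\mathbf{z}\|^4$. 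Hence for $r>0$ sufficiently small, $\inf_{S_r^+} I \ge \tfrac{\alpha}{4} r^2 > 0$.

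Next, fix any $\mathbf{u}\in \mathbf{H}\setminus\tilde{\mathbf{H}}$, so that $\mathbf{u}^+\neq \mathbf{0}$ and $\mathbf{H}(\mathbf{u}) = \mathbb{R}\mathbf{u}^+\oplus\tilde{\mathbf{H}}$. Applying Lemma \ref{compact} to the compact set $E=\{\mathbf{u}^+\}\subset\mathbf{H}^+\setminus\{\mathbf{0}\}$, I obtain $R=R(\mathbf{u})>0$ with $I(\mathbf{w})\le 0$ whenever $\mathbf{w}\in\mathbf{H}(\mathbf{u})$ and $\|\mathbf{w}\|\ge R$. I then set $\rho(\mathbf{u}):=\max\{R,2r\}>r$. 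Recalling that $\tilde{\mathbf{H}}$ is finite dimensional (since each $\tilde H_i$ is a finite direct sum of eigenspaces of $-\Delta$), the set $M(\mathbf{u})$ is a compact subset of the finite-dimensional half-space $\hat{\mathbf{H}}(\mathbf{u})=\mathbb{R}^+\mathbf{u}^+\oplus\tilde{\mathbf{H}}$, and its relative boundary decomposes as $\partial M(\mathbf{u}) = \Sigma_1\cup\Sigma_2$, where $\Sigma_1=\{\mathbf{v}\in\tilde{\mathbf{H}}: \|\mathbf{v}\|\le\rho(\mathbf{u})\}$ is the $t=0$ face and $\Sigma_2=\{t\mathbf{u}+\mathbf{v}: t\ge 0,\, \mathbf{v}\in\tilde{\mathbf{H}},\, \|t\mathbf{u}+\mathbf{v}\|=\rho(\mathbf{u})\}$ is the outer spherical cap.

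On $\Sigma_1$, since $-\Delta-\tau_i$ is non-positive on $\tilde H_i$, we have $J(\mathbf{v},\mathbf{v})\le 0$, while $\int_\Omega F(\mathbf{v})\,\dx\ge 0$ since $\beta, \mu_1,\mu_2>0$; thus $I(\mathbf{v})\le 0$. On $\Sigma_2$, by construction $\|\mathbf{w}\|=\rho(\mathbf{u})\ge R$ and $\mathbf{w}\in\mathbf{H}(\mathbf{u})$, so Lemma \ref{compact} gives $I(\mathbf{w})\le 0$. Therefore $\max_{\partial M(\mathbf{u})} I\le 0$. Since $\mathbf{0}\in\Sigma_1\subset\partial M(\mathbf{u})$ with $I(\mathbf{0})=0$, one has equality $\max_{\partial M(\mathbf{u})} I = 0$, and taking the infimum over $\mathbf{u}\in\mathbf{H}\setminus\tilde{\mathbf{H}}$ yields $0$ as well.

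The only real subtlety I anticipate is the $\mathbf{u}$-dependent choice of $\rho(\mathbf{u})$: one cannot take a single uniform $\rho$, because the decay rate of $I$ along rays in $\mathbf{H}(\mathbf{u})$ depends on the direction $\mathbf{u}^+/\|\mathbf{u}^+\|$, which is unbounded in $\mathbf{H}^+$. Lemma \ref{compact} provides exactly the pointwise ingredient needed, which is why it was set up first; beyond that, the remaining estimates are routine energy bounds coming from the spectral decomposition and Sobolev inequality.
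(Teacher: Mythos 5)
Your proposal is correct and follows essentially the same route as the paper: positivity on $S_r^+$ from the coercivity of $J$ on $\mathbf{H}^+$ together with the quartic smallness $\int_\Omega F(\mathbf{z})\,\dx = O(\|\mathbf{z}\|^4)$, the radius $\rho(\mathbf{u})$ from Lemma \ref{compact} applied to $E=\{\mathbf{u}^+\}$, nonpositivity of $I$ on the $t=0$ face because $J\le 0$ on $\tilde{\mathbf{H}}$ and $F\ge 0$, and the value $0$ attained at $\mathbf{0}\in\partial M(\mathbf{u})$. The only cosmetic difference is that you invoke the statement of Lemma \ref{compact} with the singleton compact set, whereas the paper cites "the proof of" that lemma; both yield the same bound on the outer part of $\partial M(\mathbf{u})$.
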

		\begin{proof} {Since $\int_{\Omega}F(\mathbf{z})dx=o(\|\mathbf{z}\|^2)$ as $\|\mathbf{z}\| \rightarrow 0$}, there are constants $\alpha>0$ and $r>0$ such that
			\begin{equation*}
				\inf_{\mathbf{z}\in S_r^+}I(\mathbf{z}) \geq \alpha >0, \quad \text{where} \  S_r^+=\{\mathbf{z} \in \mathbf{H}^+:\|\mathbf{z}\|=r\}.
			\end{equation*}
			By the proof of Lemma \ref{compact}, for any  given $\mathbf{u}\in \mathbf{H}\backslash \tilde{\mathbf{H}}$, there exists $\rho:=\rho(\mathbf{u})>r$ such that
			\begin{equation*}
				I (\mathbf{w})\le0\quad \text{for any } \mathbf{w}\in \partial M(\mathbf{u}) \text{ with } \|\mathbf{w}\|=\rho\text{ and }t\ge0.
			\end{equation*}
			For $\mathbf{w}\in \partial M(\mathbf{u})$ such that $\|\mathbf{w}\|\le \rho \text{ and }t=0$, then $\mathbf{w}\in \tilde{\mathbf{H}}$, we also have
			$$I (\mathbf{w})=\frac{1}{2}J(\mathbf{w},\mathbf{w})-\int_\Omega F(\mathbf{w})dx \le 0.$$
			For any $\mathbf{u}\in \mathbf{H}\backslash\tilde{\mathbf{H}}$, we get $\mathbf{0}\in \partial M(\mathbf{u})$. So
			$ {\inf\limits_{\mathbf{u}\in \mathbf{H}\backslash\tilde{\mathbf{H}}} \max\limits_{\mathbf{w}\in\partial M(\mathbf{u})}I(\mathbf{w})=I(\mathbf{0})=0}.$
	\end{proof}}
	
	The following linking-type lemma is inspired by \cite[Theorem 2.1] {mederski_ground_2016}.
	
	\begin{lemma}\label{ct}
		There exists a Palais-Smale sequence $\{\mathbf{u}_n\}$ for $I$ at the level $c_l$ defined by \eqref{eq:linkingvalue}. Moreover, there holds that $c_l\le c'$.
	\end{lemma}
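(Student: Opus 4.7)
The statement splits into two claims: the existence of a Palais-Smale sequence at level $c_l$, and the comparison $c_l \le c'$. I would dispose of the comparison first (it is essentially a one-line observation given Corollary \ref{maximum} and Lemma \ref{l-g}), and then prove existence by a linking-type deformation argument in the spirit of \cite[Theorem 2.1]{mederski_ground_2016}.

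\textbf{The inequality $c_l \le c'$.} Pick any $\mathbf{u}\in\N'$. By Lemma \ref{compact} applied to the compact set $\{\mathbf{u}^+\}$, one has a uniform bound on $\|\mathbf{u}\|$ along $\{I\ge 0\}\cap \hat{\mathbf{H}}(\mathbf{u})$, so the radius $\rho(\mathbf{u})$ supplied by Lemma \ref{l-g} can be taken large enough that $\mathbf{u}\in M(\mathbf{u})\subset \hat{\mathbf{H}}(\mathbf{u})$. Taking $\gamma=\mathrm{id}\in\Gamma(\mathbf{u})$ and invoking the definition of $\N'$ (so that $\mathbf{u}$ is the global maximum of $I$ on $\hat{\mathbf{H}}(\mathbf{u})$), we get $\max_{\mathbf{w}\in M(\mathbf{u})} I(\mathbf{w}) = I(\mathbf{u})$. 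Since this candidate value dominates $c_l$, taking infimum over $\mathbf{u}\in \N'$ yields $c_l\le c'$.

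\textbf{Existence of the PS sequence.} The crucial geometric input is the topological linking between $\partial M(\mathbf{u})$ and the sphere $S_r^+\subset \mathbf{H}^+$: for every admissible $\mathbf{u}$ and every $\gamma\in\Gamma(\mathbf{u})$, one has $\gamma(M(\mathbf{u}))\cap S_r^+\ne \emptyset$. I would establish this by a Brouwer degree computation in the finite-dimensional slice $\R\mathbf{u}\oplus\tilde{\mathbf{H}}$ (note that $\tilde{\mathbf{H}}$ is finite-dimensional because the Dirichlet Laplacian on a bounded domain has discrete spectrum), homotoping $\gamma$ to $\mathrm{id}$; the homotopy is admissible on $\partial M(\mathbf{u})$ because $\gamma|_{\partial M(\mathbf{u})}=\mathrm{id}$. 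Combined with Lemma \ref{l-g} this gives $c_l\ge \inf_{S_r^+} I\ge \alpha>0$. Since $\sup_{\partial M(\mathbf{u})} I\le 0<c_l$, a contradiction-and-deformation argument yields the required PS sequence: if no PS sequence at level $c_l$ existed, then a standard pseudo-gradient flow $\eta$ associated to $I$ could be arranged to fix $\{I\le 0\}\supset \partial M(\mathbf{u})$ and to push $\{I\le c_l+\varepsilon\}$ into $\{I\le c_l-\varepsilon\}$ for some small $\varepsilon>0$; applying $\eta$ to a near-optimal $\gamma$ would then produce a new element of $\Gamma(\mathbf{u})$ violating the definition of $c_l$.

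\textbf{Main obstacle.} The delicate point is that $M(\mathbf{u})$ is base-point dependent, so the linking and the deformation must be orchestrated uniformly in $\mathbf{u}$: the radius $\rho(\mathbf{u})$ has to be chosen large enough that $I\le 0$ (not merely $I\le c_l/2$) on the outer part of $\partial M(\mathbf{u})$, so that the flow $\eta$ truly fixes $\partial M(\mathbf{u})$ and preserves the class $\Gamma(\mathbf{u})$. This coupling of the base point, the outer radius furnished by Lemma \ref{l-g}, and the pseudo-gradient flow is the subtle issue that distinguishes the present strongly indefinite setting from the classical Benci-Rabinowitz framework with a fixed linking pair.
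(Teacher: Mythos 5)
Your proposal follows essentially the same route as the paper: the inequality $c_l\le c'$ via $\gamma=\mathrm{id}$ for $\mathbf{u}\in\N'$, the lower bound $c_l\ge\inf_{S_r^+}I>0$ from the linking geometry of Lemma \ref{l-g} together with the standard degree-theoretic intersection property, and the Palais--Smale sequence from the quantitative deformation lemma applied to a near-optimal $\gamma$, with $\varepsilon<c_l/2$ ensuring the flow fixes $\partial M(\mathbf{u})$ where $I\le 0$. The only cosmetic difference is that the paper delegates the intersection property to the argument of Willem's Theorem 2.12, whereas you sketch the Brouwer degree computation explicitly (noting that it is carried out for the projected, finite-dimensional map, since $\gamma$ need not preserve the slice $\R\mathbf{u}\oplus\tilde{\mathbf{H}}$).
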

	\begin{proof} We apply Lemma \ref{l-g} and the arguments in  the  proof of \cite[Theorem 2.12]{willem_minimax:1996}.
		For each fixed $\mathbf{u}\in \mathbf{H}\backslash \tilde{\mathbf{H}}$ and  for any $\gamma \in \Gamma(\mathbf{u})$, we have
		$$\max\limits_{M(\mathbf{u})}I\circ\gamma\ge \inf_{S_r^+}I.$$
		Therefore $ c_l\ge \inf_{S_r^+} I$, where $r>0$ is given in Lemma \ref{l-g} that is independent of $\mathbf{u}$.
		
		Now we assume for contradiction that there exists $\epsilon\in(0, c_l/2)$ such that
		for any $\mathbf{u}$ satisfying $c_l-2\epsilon\le I(\mathbf{u})\le c_l+2\epsilon$, there holds
		$$   \|I'(\mathbf{u})\|\ge \epsilon.$$
		By \cite[Theorem 2.3]{willem_minimax:1996}, we have a flow $\eta\in C([0,1]\times \mathbf{H},\mathbf{H})$ such that
		\begin{align*}
			\eta(t,\mathbf{u})=\mathbf{u}, &\quad \text{if } t=0\text{ or } \mathbf{u}\not\in I^{-1}([c_l-2\epsilon,c_l+2\epsilon])\\
			I(\eta(t,\mathbf{u}))\le I(\mathbf{u}),\ &\quad \forall \ t\ge0, \mathbf{u}\in \mathbf{H},\\
			I(\eta(1,\mathbf{u}))\le c_l-\epsilon,\ &\quad \forall \ \mathbf{u} \in I^{-1}((-\infty,c_l+\epsilon]).
		\end{align*}
		By the definition of $c_l$,   there exists $\mathbf{u}\in \mathbf{H}\backslash \tilde{\mathbf{H}}$ and $\gamma\in \Gamma(\mathbf{u})$, such that
		$$ I(\gamma(\mathbf{u'}))<c_l+\epsilon \ \ \hbox{for all} \ \mathbf{u}' \in M(\mathbf{u}).$$
		Note that $\eta(1,\gamma(\cdot)) \in \Gamma(\mathbf{u})$, it follows that
		$$ I(\eta(1,\gamma(\mathbf{u'}))) \le c_l-\epsilon,  \  \ \hbox{for all} \ \mathbf{u}' \in M(\mathbf{u}), $$
		which leads to a contradiction that  would read as $c_l \leq c_l-\epsilon$.
		
		At last, for any $\mathbf{u}\in \N'$, we take $\gamma=id\in \Gamma(\mathbf{u})$ and obtain
		$$c_l\le\max\limits_{\mathbf{u'}\in M(\mathbf{u})}I(\mathbf{u}')=I(\mathbf{u}).$$
		Hence $c_l\le c'.$
		The proof is complete.
	\end{proof}

	Next, we estimate the energy of semi-trivial solutions of \eqref{sy1}.
	\begin{lemma}\label{max}
		For any semi-trivial solution $\mathbf{u}$ of the system \eqref{sy1}, there holds $$I(\mathbf{w})<I(\mathbf{u})  \ \text{ for every } \ \mathbf{w}\in \hat{\mathbf{H}}(\mathbf{u}), \mathbf{w}\ne \mathbf{u}.$$
	\end{lemma}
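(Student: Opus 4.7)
My plan is to reduce the lemma to the scalar Szulkin--Weth maximum principle. Without loss of generality take $\mathbf{u} = (U_1, 0)$ with $U_1 \in K_1$; the case $(0, U_2)$ is symmetric. Since $U_1$ is a ground state of the scalar problem \eqref{eq:scalar}, its positive-definite component $U_1^+\in H_1^+$ is nonzero (otherwise $U_1\in\tilde H_1$ would give $I_1(U_1)\le 0$, contradicting the positivity of the scalar ground state energy). Hence $\mathbf{u}\notin\tilde{\mathbf{H}}$, and every $\mathbf{w}\in\hat{\mathbf{H}}(\mathbf{u})$ admits a unique representation $\mathbf{w}=(tU_1+\tilde v_1,\tilde v_2)$ with $t\ge 0$ and $\tilde v_i\in\tilde H_i$.

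Writing the scalar functionals $I_i(v):=\tfrac12 J_i(v,v)-\tfrac{\mu_i}{4}\int_\Omega v^4\dx$, I would first record the identity
\begin{equation*}
I(\mathbf{w})-I(\mathbf{u}) = \bigl[I_1(tU_1+\tilde v_1)-I_1(U_1)\bigr] + I_2(\tilde v_2) - \frac{\beta}{2}\int_\Omega (tU_1+\tilde v_1)^2 \tilde v_2^2\dx,
\end{equation*}
obtained by expanding $F(\mathbf{w})$ and using $I(\mathbf{u})=I_1(U_1)$. I would then show that each of the three summands on the right is non-positive, for independent reasons: the bracket by the strict scalar maximum property of $U_1$ on $\R^+ U_1\oplus\tilde H_1$ from the Szulkin--Weth framework in \cite{szulkin_ground_nodate-1}, with strict inequality unless $(t,\tilde v_1)=(1,0)$; the term $I_2(\tilde v_2)$ because $J_2(\tilde v_2,\tilde v_2)\le 0$ on $\tilde H_2$ and the quartic contributes non-positively, so $I_2(\tilde v_2)\le -\tfrac{\mu_2}{4}\|\tilde v_2\|_{L^4}^4 < 0$ whenever $\tilde v_2\ne 0$; the coupling term because $\beta>0$.

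Finally, the assumption $\mathbf{w}\ne\mathbf{u}$ forces either $(t,\tilde v_1)\ne(1,0)$ or $\tilde v_2\ne 0$; in either case at least one of the first two summands is strictly negative, yielding $I(\mathbf{w})<I(\mathbf{u})$. The one substantive input is the strict scalar maximum property of the Szulkin--Weth ground state on $\R^+ U_1\oplus\tilde H_1$, which is where I would expect any residual difficulty. For the quartic nonlinearity, however, this property is standard and ultimately rests on the strict concavity of $s\mapsto I_1\bigl(U_1+s(tU_1+\tilde v_1-U_1)\bigr)$ combined with $I_1'(U_1)|_{\R U_1\oplus\tilde H_1}\equiv 0$, so simply citing the scalar result will suffice.
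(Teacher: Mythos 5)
Your proof is correct, and its core mechanism coincides with the paper's: both rest on the Szulkin--Weth ``quadratic form plus pointwise inequality'' computation. The paper runs a single self-contained calculation for the system functional, using $I(\mathbf{w})-I(\mathbf{u})=I(\mathbf{w})-I(\mathbf{u})-I'(\mathbf{u})\bigl(\tfrac{t^2-1}{2}\mathbf{u}+t\mathbf{v}\bigr)=\tfrac{1}{2}J(\mathbf{v},\mathbf{v})+\int_\Omega g\,\mathrm{d}x$ together with the algebraic identity $g=-\sum_{i}\tfrac{\mu_i}{4}\bigl[(u_i^2-w_i^2)^2+2v_i^2u_i^2\bigr]-\tfrac{\beta}{2}w_1^2v_2^2\le 0$; your component-wise split $\bigl[I_1(tU_1+\tilde v_1)-I_1(U_1)\bigr]+I_2(\tilde v_2)-\tfrac{\beta}{2}\int_\Omega(tU_1+\tilde v_1)^2\tilde v_2^2\,\mathrm{d}x$ is exactly that identity reorganized, with the first-component piece outsourced to the scalar strict maximum property of \cite{szulkin_ground_nodate-1} rather than recomputed. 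This buys you a shorter argument at the cost of importing the strictness statement as a black box, which is legitimate here: $U_1$ is a nontrivial critical point of the scalar functional, hence lies in the scalar Nehari--Pankov set, and unique continuation supplies the strictness; your closing case analysis ($w_1\ne U_1$ versus $\tilde v_2\ne 0$) then correctly exhausts $\mathbf{w}\ne\mathbf{u}$. One cosmetic point: the lemma concerns an arbitrary semi-trivial solution, so ``WLOG $U_1\in K_1$'' is not literally a WLOG; but since your argument uses only that $U_1$ is a nontrivial solution of the scalar equation, the ground-state restriction is superfluous and should simply be dropped.
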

	
	\begin{proof} Let $\mathbf{u}$ be a semi-trivial solution of \eqref{sy1}.  For $\mathbf{w}\in \hat{\mathbf{H}}(\mathbf{u})$, we can write  $\mathbf{w}= t \mathbf{u} + \mathbf{v} $ where  $\mathbf{v}\in \tilde{\mathbf{H}}$ and $t\ge0$.
		It is sufficient to consider the case that $\mathbf{u}=(u_1,0)$.
		This means that  $u_2=0$ and $\mathbf{w}=(w_1, w_2)=(tu_1+v_1, v_2)$.  Observe that
		\begin{align*}
			I(\mathbf{w})-I(\mathbf{u})
			&= I(\mathbf{w})-I(\mathbf{u})-I'(\mathbf{u})\left(\frac{t^2-1}{2}\mathbf{u}+t\mathbf{v}\right)\\
			&= -\frac{1}{2}J(\mathbf{v},\mathbf{v})+\int_{\Omega}g(t,\mathbf{u},\mathbf{v}) \dx,
		\end{align*}
		where $$	g(t,\mathbf{u},\mathbf{v})=f(\mathbf{u})\cdot\left(\frac{t^2-1}{2}\mathbf{u}+t\mathbf{v}\right)+F(\mathbf{u})-F(t\mathbf{u}+\mathbf{v}).$$
		To determine the sign of $I(\mathbf{w})-I(\mathbf{u})$, we need to estimate the term $\int_{\Omega}g(t,\mathbf{u},\mathbf{v}) \dx$. In fact,
		\begin{align*}
			g(t,\mathbf{u},\mathbf{v})&=-\sum_{i=1}^2  \frac{\mu_{i}}{4} \left[(u_i^2-w_i^2)^2+2v_i^2u_i^2\right]-\frac{\beta}{2} \left[(u_1^2-w_1^2)(u_2^2-w_2^2)+v_1^2u_2^2+u_1^2v_2^2\right]
			\\&=-\sum_{i=1}^2  \frac{\mu_{i}}{4} \left[(u_i^2-w_i^2)^2+2v_i^2u_i^2\right]-\frac{\beta}{2} \left[-v_2^2(u_1^2-w_1^2)+u_1^2v_2^2\right]
			\\&=-\sum_{i=1}^2  \frac{\mu_{i}}{4} \left[(u_i^2-w_i^2)^2+2v_i^2u_i^2\right]-\frac{\beta}{2}w_1^2v_2^2\le0,
		\end{align*}
		therefore $I(\mathbf{w})\leq I(\mathbf{u})$.
		Clearly, the equality $\int_{\Omega}g(t,\mathbf{u},\mathbf{v}) \dx=0$ holds if and only if $\mathbf{w}=\mathbf{u}$.
		So $I(\mathbf{w})<I(\mathbf{u})$ when $\mathbf{w}\ne \mathbf{u}$.
	\end{proof}
	Let $Q:\mathbf{H}\rightarrow \tilde{\mathbf{H}}$ be the orthogonal projector and $$G:\mathbf{H}\rightarrow \R\times \tilde{\mathbf{H}}, \ \  \quad G(\mathbf{u})=( I'(\mathbf{u})\mathbf{u},Q\nabla I(\mathbf{u})).$$
	
	\begin{lemma} \label{l22}
		For any semi-trivial solution $\mathbf{u}$ of \eqref{sy1}, $G'(\mathbf{u}):\mathbf{H}\rightarrow \R\times \tilde{\mathbf{H}}$ is surjective.
	\end{lemma}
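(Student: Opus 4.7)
The plan is to treat a semi-trivial solution of the form $\mathbf{u}=(U_1,0)$ with $U_1\in K_1$; the case $(0,U_2)$ is symmetric. Since $I'(\mathbf{u})=0$, differentiating $G$ yields
\[
G'(\mathbf{u})\mathbf{h} \;=\; \bigl(I''(\mathbf{u})(\mathbf{u},\mathbf{h}),\; Q(\nabla I)'(\mathbf{u})\mathbf{h}\bigr)
\]
for every $\mathbf{h}=(h_1,h_2)\in\mathbf{H}$. A direct computation shows that $\nabla f(U_1,0)$ is diagonal, so the Hessian is block-diagonal:
\[
I''(\mathbf{u})(\mathbf{h},\mathbf{k}) \;=\; B_1(h_1,k_1)+B_2(h_2,k_2),
\]
where $B_1(h,k):=J_1(h,k)-3\mu_1\int_\Omega U_1^2 hk\,\dx$ and $B_2(h,k):=J_2(h,k)-\beta\int_\Omega U_1^2 hk\,\dx$. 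Consequently $G'(\mathbf{u})$ splits into two independent pieces, one in $h_1$ with target $\R\times\tilde H_1$ and one in $h_2$ with target $\tilde H_2$, so I verify surjectivity on each separately.

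For the $h_2$-piece, I would restrict to the finite-dimensional subspace $\tilde H_2$; by a dimension count, surjectivity reduces to nondegeneracy of $B_2$ on $\tilde H_2$. Since $J_2\le 0$ on $\tilde H_2$ by the spectral decomposition, and since $U_1\ne 0$ almost everywhere in $\Omega$ by unique continuation applied to $-\Delta U_1-\tau_1 U_1=\mu_1 U_1^3$, we have $\int_\Omega U_1^2\tilde h_2^2\,\dx>0$ whenever $\tilde h_2\ne 0$. Hence $B_2$ is strictly negative definite on $\tilde H_2$, and the $h_2$-piece is an isomorphism onto $\tilde H_2$.

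For the $h_1$-piece, I would restrict to the finite-dimensional subspace $V:=\R U_1\oplus\tilde H_1$, identify $V^*$ with $\R\times\tilde H_1$ via pairing the first coordinate with $U_1$ and the second with the $H_0^1$-inner product, and aim to show that $h_1\mapsto B_1(h_1,\cdot)$ from $V$ to $V^*$ is an isomorphism. Writing $h_1=tU_1+\tilde h_1$, using the Euler identity $J_1(U_1,\varphi)=\mu_1\int_\Omega U_1^3\varphi\,\dx$, expanding, completing the square in $t$, and then applying Cauchy--Schwarz to $\int_\Omega U_1^3\tilde h_1\,\dx=\int_\Omega U_1^2(U_1\tilde h_1)\,\dx$, one should arrive at
\[
B_1(h_1,h_1)\;\le\; -2\mu_1\!\int_\Omega U_1^4\,\dx\cdot\Bigl[t+\tfrac{\int_\Omega U_1^3\tilde h_1\,\dx}{\int_\Omega U_1^4\,\dx}\Bigr]^2 + J_1(\tilde h_1,\tilde h_1) - \mu_1\!\int_\Omega U_1^2\tilde h_1^2\,\dx \;\le\; 0.
\]
Tracing the equality cases and invoking unique continuation once more forces $\tilde h_1=0$ and then $t=0$, so $B_1$ is strictly negative definite on $V$, and the $h_1$-piece is an isomorphism onto $\R\times\tilde H_1$.

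The main obstacle is the $h_1$-piece: unlike $B_2$, the form $B_1$ is not manifestly sign-definite because the positive contribution $J_1(U_1,U_1)=\mu_1\int_\Omega U_1^4\,\dx>0$ must be absorbed by the cubic correction $-3\mu_1\int_\Omega U_1^2 h_1^2\,\dx$ through Cauchy--Schwarz together with unique continuation. This estimate is precisely the scalar variational counterpart encoding the ground-state property of $U_1$ on the Pankov--Nehari manifold.
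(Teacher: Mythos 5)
Your proposal is correct and follows essentially the same route as the paper: both reduce surjectivity onto the finite-dimensional target $\R\times\tilde{\mathbf{H}}$ to strict negative definiteness of the quadratic form $\langle I''(\mathbf{u})\mathbf{w},\mathbf{w}\rangle$ on $\R\mathbf{u}\oplus\tilde{\mathbf{H}}$ (your $V\times\tilde H_2$), using $I'(\mathbf{u})=0$, the Euler identity $J_1(U_1,\varphi)=\mu_1\int_\Omega U_1^3\varphi\,\dx$, the sign of $J$ on $\tilde{\mathbf{H}}$, and unique continuation. The only cosmetic difference is that you block-diagonalize and complete the square with Cauchy--Schwarz in the first block, whereas the paper keeps the term $-\mu_1\int_\Omega[2u_1^2(tu_1+v_1)^2+u_1^2v_1^2]\,\dx$ intact as a manifestly nonpositive quantity and case-splits on $J(\mathbf{v},\mathbf{v})$.
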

	\begin{proof}
		Let  $\mathbf{u}=(u_1,0)$ be a semi-trivial solution of \eqref{sy1}.  To show that $G'(\mathbf{u}):\mathbf{H}\rightarrow \R\times \tilde{\mathbf{H}}$ is surjective, it  suffices to show that for every $(t,\mathbf{v})\in \R\times\tilde{\mathbf{H}}$, $(t,\mathbf{v})\ne 0$, there exists $\mathbf{w}$ such that
		$$\la G'(\mathbf{u})\mathbf{w}, (t,\mathbf{v})\ra\ne 0.$$
		Take $(t,\mathbf{v})\in \R\times\tilde{\mathbf{H}}$ with $(t,\mathbf{v})\ne 0$ and choose $\mathbf{w}=t\mathbf{u}+\mathbf{v}$.
		Since $\mathbf{u}=(u_1, 0)$ is a solution of \eqref{sy1},  we have that  $I'(\mathbf{u})\mathbf{u}=I'(\mathbf{u})\mathbf{v}=0$. A direct calculation shows that  
		\begin{align*}	
			\la G'(\mathbf{u})\mathbf{w}, (t,\mathbf{v})\ra
			&=\left\la \left(\begin{array}{c}
				\la I''(\mathbf{u})\mathbf{u}, \mathbf{w}\ra +\la I'(\mathbf{u}), \mathbf{w}\ra \\
				 {Q}I''(\mathbf{u})\mathbf{w}
			\end{array}\right), \left(\begin{array}{c}
				t \\ \mathbf{v}
			\end{array}\right)\right\ra\\
			&=t\la I''(\mathbf{u})\mathbf{u}, \mathbf{w}\ra + t\la I'(\mathbf{u}), \mathbf{w}\ra + \la I''(\mathbf{u})\mathbf{w}, \mathbf{v}\ra\\
			&=\langle I''(\mathbf{u})\mathbf{w},\mathbf{w}\rangle\\
			&=\langle I''(\mathbf{u})\mathbf{w},\mathbf{w}\rangle-I'(\mathbf{u})(t^2u_1-2tv_1,0)\\
			&=J(\mathbf{v},\mathbf{v})-\mu_{1}\int_{\Omega}[2u_1^2(tu_1+v_1)^2+v_1^2u_1^2] \dx-\beta\int_{\Omega}u_1^2v_2^2 \dx.
		\end{align*}
		If $J(\mathbf{v},\mathbf{v})<0$, then
		$$\la G'(\mathbf{u})\mathbf{w}, (t,\mathbf{v})\ra\le J(\mathbf{v},\mathbf{v})< 0.$$
		If $J(\mathbf{v},\mathbf{v})=0$ and $\mathbf{v}=0$, then it follows from  $ (t,\mathbf{v})\ne 0$ that
		$$\la G'(\mathbf{u})\mathbf{w}, (t,\mathbf{v})\ra= -2\mu_{1}\int_\Omega t^2u_1^4 \dx<0.$$
		Let $J(\mathbf{v},\mathbf{v})=0$ and $\mathbf{v}\ne0$. If $v_1\ne 0$, then $v_1$ is an eigenfunction corresponding to $\tau_1$.
		By the unique continuation property of $v_1$, we have
		$$\la G'(\mathbf{u})\mathbf{w}, (t,\mathbf{v})\ra\le -\mu_1\int_\Omega v_1^2u_1^2 \dx< 0.$$
		For the case $v_2\ne0$,  $v_2$ is an eigenfunction corresponding to $\tau_2$,   we have $$\la G'(\mathbf{u})\mathbf{w}, (t,\mathbf{v})\ra\le -\beta\int_\Omega u_1^2v_2^2 \dx< 0.$$
		So we have $\la G'(\mathbf{u})\mathbf{w}, (t,\mathbf{v})\ra<0$ in all cases.	
	\end{proof}

	For a semi-trivial solution $\mathbf{u}\in \N$, $\ker(G'(\mathbf{u}))$ is a closed linear subspace of $\mathbf{H}$, so we have the orthogonal decomposition
	$$\mathbf{H}=\ker(G'(\mathbf{u}))\oplus\ker(G'(\mathbf{u}))^\perp .$$
	Since $G'(\mathbf{u})$ is surjective,  the mapping
	$$ G'(\mathbf{u})|_{\ker(G'(\mathbf{u}))^\perp}:\ker(G'(\mathbf{u}))^\perp\rightarrow\R\times \tilde{\mathbf{H}}$$ is bijective, which ensures that the Implicit Function Theorem is applicable. Thus there is a neighborhood $U(\mathbf{u})$ which has a local parametrization in $\N$,  and the tangent space to $\N$ at $\mathbf{u}$ is $$T_\mathbf{u}\mathcal{N}:=(\ker(G'(\mathbf{u}))^\perp)^\perp=\ker(G'(\mathbf{u})).$$  By the proof of Lemma \ref{l22},
	we  see that  $T_\mathbf{u}\mathcal{N} = \ker(G'(\mathbf{u}))=\mathbf{\mathbf{H}}(\mathbf{u})^{\perp}$.
	
	\
	
	\begin{lemma}
		Assume that $\beta>\Lambda$. Then $c'<c_{sem}.$
	\end{lemma}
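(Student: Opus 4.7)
The plan is to perturb the semi-trivial solution $(U_1,0)$ along a second-component direction in which the Hessian of $I$ is made negative by $\beta>\hat\beta_1$, take the unique global maximum on the associated fiber provided by Corollary~\ref{maximum}, and show that its energy lies strictly below $c_{sem}$. Without loss of generality assume $c_{sem}=I(U_1,0)$. By the definition of $\hat\beta_1$ as a double infimum and the strict inequality $\beta>\hat\beta_1$, there exist $U_1\in K_1$ and $\varphi\in H_2^+\setminus\{0\}$ with
\[
J_2(\varphi,\varphi)<\beta\int_\Omega U_1^2\varphi^2\,\dx.
\]
For each $s>0$ set $\mathbf{u}_s=(U_1,s\varphi)\in\mathbf{H}\setminus\tilde{\mathbf{H}}$, and let $\hat{\mathbf{u}}_s\in\mathcal{N}'\cap\hat{\mathbf{H}}(\mathbf{u}_s)$ be the unique global maximizer of $I|_{\hat{\mathbf{H}}(\mathbf{u}_s)}$ supplied by Corollary~\ref{maximum}; then $c'\le I(\hat{\mathbf{u}}_s)$.

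The heart of the proof is the estimate $I(\hat{\mathbf{u}}_s)<I(U_1,0)$ for some small $s>0$. Writing $\hat{\mathbf{u}}_s=(\eta_s,\xi_s)$ with $\eta_s=\tau_s U_1+v_{s,1}$ and $\xi_s=\tau_s s\varphi+v_{s,2}$, decompose
\[
I(\hat{\mathbf{u}}_s)=I_1(\eta_s)+I_2(\xi_s)-\tfrac{\beta}{2}\int_\Omega\eta_s^2\xi_s^2\,\dx,
\]
where $I_i(u)=\tfrac12 J_i(u,u)-\tfrac{\mu_i}{4}\|u\|_{L^4}^4$ is the scalar energy. The scalar version of Lemma~\ref{max} bounds the first summand by $I_1(U_1)=I(U_1,0)$. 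Since the Hessian of $I$ at $(U_1,0)$ is negative definite on the tangent subspace $\mathbb{R}(U_1,0)\oplus\tilde{\mathbf{H}}$, the Implicit Function Theorem gives $\hat{\mathbf{u}}_s\to(U_1,0)$ as $s\to 0^+$, so $\xi_s\to 0$ and $\eta_s\to U_1$. A second-order Taylor expansion of $I$ at $(U_1,0)$, combined with the orthogonality $J_2(\varphi,w)=0$ for $w\in\tilde H_2$ and the Euler--Lagrange equation satisfied by the optimal $\tilde H_2$-component, identifies the leading $O(s^2)$ contribution as controlled by $J_2(\varphi,\varphi)-\beta\int U_1^2\varphi^2\,\dx<0$. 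This produces $I(\hat{\mathbf{u}}_s)<I(U_1,0)$ for $s$ small, and hence $c'<c_{sem}$. The case $c_{sem}=I(0,U_2)$ follows by the symmetric argument using $\beta>\hat\beta_2$ and an analogous test direction $(s\psi,U_2)$ with $\psi\in H_1^+$ satisfying $J_1(\psi,\psi)<\beta\int U_2^2\psi^2\,\dx$.

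The main obstacle is the precise control of the second-order asymptotic: the maximizer $\hat{\mathbf{u}}_s$ has a generically nonzero $\tilde H_2$-component at order $s$, and one must verify that its contribution does not cancel the negative sign of $J_2(\varphi,\varphi)-\beta\int U_1^2\varphi^2$. This is handled by choosing $\varphi$ near the infimum defining $\hat\beta_1$ and, if necessary, adjusting within $H_2^+$ so that $U_1^2\varphi$ is (approximately) $L^2$-orthogonal to $\tilde H_2$, which forces the optimal $\tilde H_2$-component to vanish at leading order and preserves the strict inequality $J_2(\varphi,\varphi)<\beta\int U_1^2\varphi^2\,\dx$.
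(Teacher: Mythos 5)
Your overall strategy differs from the paper's in a way that introduces a genuine gap at the central step. The paper does not take the global maximizer of $I$ over the fiber $\hat{\mathbf{H}}(U_1,s\varphi)$; instead it uses the surjectivity of $G'(\mathbf{U}_1)$ (Lemma \ref{l22}) and the implicit function theorem to build a $C^1$-curve $\gamma_1$ \emph{inside} $\N$ with $\gamma_1(0)=\mathbf{U}_1$ and $\gamma_1'(0)=(0,h_2)$. Because $I'(\mathbf{U}_1)=0$, the Taylor expansion of $I\circ\gamma_1$ has second-order coefficient exactly $J_2(h_2,h_2)-\beta\int_\Omega U_1^2h_2^2\,\dx<0$, with no contamination from other directions; the remaining work is to show $\gamma_1$ meets $\N'$ arbitrarily close to $\mathbf{U}_1$, which is done by a compactness argument (Lemma \ref{compact}) combined with the uniqueness of the global maximizer on $\hat{\mathbf{H}}(\mathbf{U}_1)$ (Lemma \ref{max}).

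The gap in your route is the claim $I(\hat{\mathbf{u}}_s)<I(U_1,0)$. The maximizer $\hat{\mathbf{u}}_s$ ranges over the whole set $\R^+(U_1,s\varphi)\oplus\tilde{\mathbf{H}}$, so at second order its energy is governed by the quadratic form $\la I''(U_1,0)\mathbf{w},\mathbf{w}\ra$ on $\R(U_1,0)\oplus\R(0,\varphi)\oplus\tilde{\mathbf{H}}$, not merely by its value in the single direction $(0,\varphi)$. Restricted to the second component this form reads
\[
Q(a\varphi+v_2)=a^2\Bigl(J_2(\varphi,\varphi)-\beta\int_\Omega U_1^2\varphi^2\,\dx\Bigr)+\Bigl(J_2(v_2,v_2)-\beta\int_\Omega U_1^2v_2^2\,\dx\Bigr)-2a\beta\int_\Omega U_1^2\varphi v_2\,\dx,
\]
and although both diagonal blocks are negative, the off-diagonal term $-2a\beta\int_\Omega U_1^2\varphi v_2\,\dx$ can make $Q$ indefinite (e.g.\ when $\int_\Omega U_1^2\varphi v_2\,\dx$ is close to the Cauchy--Schwarz bound and $v_2\in H_2^0$). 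In that case one can exhibit points $(U_1,s\varphi+v_2)$ of the fiber with energy \emph{strictly greater} than $I(U_1,0)$ for small $s$, so $I(\hat{\mathbf{u}}_s)>I(U_1,0)$ and your estimate fails. Your proposed repair --- replacing $\varphi$ by a function with $U_1^2\varphi$ $L^2$-orthogonal to $\tilde H_2$ --- is not justified: the infimum of $J_2(\psi,\psi)/\int_\Omega U_1^2\psi^2\,\dx$ over the finite-codimension subspace $\{\psi\in H_2^+:\int_\Omega U_1^2\psi v\,\dx=0\ \forall v\in\tilde H_2\}$ may exceed $\beta$, so the strict inequality you need can be lost in the orthogonalization. (A secondary, lesser issue: the convergence $\hat{\mathbf{u}}_s\to(U_1,0)$ does not follow from the implicit function theorem as stated; it requires the boundedness from Lemma \ref{compact} together with the uniqueness of the maximizer from Lemma \ref{max}.) To close the gap you essentially need the paper's device of a curve constrained to $\N$, which eliminates the problematic cross terms.
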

	\begin{proof}
		By the definition of $\hat{\beta}_1$ in \eqref{1.7}, there exists $U_1\in K_1$ and $h_2 \in H_2^+\backslash\{0\}$ such that
		\begin{equation}\label{2.1}
			\hat{\beta}_1 \le\frac{J_2(h_2,h_2)}{\int_\Omega U_1^2 h_2^2 \dx}< \beta.
		\end{equation}
		We set $\mathbf{U}_1=(U_1,0)$,
		hence
		$$ \langle I''(\mathbf{U}_1)(0,h_2),(0,h_2)\rangle=J_2(h_2,h_2)-\beta\int_{\Omega}U_1^2h_2^2 \dx<0.$$
		Note that $(0, h_2)\in\mathbf{H}(\mathbf{U}_1)^{\bot}= T_{\mathbf{U}_1}\N$,
		there is a $C^1$-curve $\gamma_1:(-\epsilon,\epsilon)\rightarrow \N$ such that $$P(\gamma_1(t))=t(0,h_2)\text{ and } \gamma_1(0)=\mathbf{U}_1, $$ where $P:\mathbf{H}\rightarrow \ker G'(\mathbf{U}_1)$ {is} the orthogonal projector. We also have
		$$\gamma_1'(0)=(0,h_2).$$
		Since $I'(\mathbf{U}_1)=0$, by the Taylor expansion, we have
		\begin{align*}
			I(\gamma_1(t))&=I(\gamma_1(0))+\frac{t^2}{2}\langle I''(\gamma_1(0))\gamma_1'(0),\gamma_1'(0)\rangle+o(t^2)
			\\&=I(\gamma_1(0))+\frac{t^2}{2}\left(J_2(h_2,h_2)-\beta\int_{\Omega}U_1^2h_2^2 \dx\right)+o(t^2).
		\end{align*}
		It follows from  \eqref{2.1} that there is $0<\bar\epsilon<\epsilon$ such that
		\begin{equation}\label{eqcp}
			I(\gamma_1(t))<I(\gamma_1(0))=I(\mathbf{U}_1), \ \ \ \forall \  t \in (-\bar\epsilon, \bar\epsilon) {\backslash\{0\}}.
		\end{equation}
		We claim that  {$\gamma_1|_{(-\bar\epsilon, \bar\epsilon)\backslash\{0\}}\cap \N'\ne\emptyset$. If the claim does not hold, then the curve $\gamma_1|_{(-\bar\epsilon, \bar\epsilon)\backslash\{0\}}\subset \N\backslash \N'$.
			 We consider a sequence $\{\gamma_1(\epsilon_n)\} \subset \N\backslash \N'$ with
			 $\{\epsilon_n\} \subset (-\bar\epsilon, \bar\epsilon)\backslash\{0\} $ and $\epsilon_n\rightarrow 0$ as $n\rightarrow \infty$.}
		By Corollary \ref{maximum},  there are sequences $\{t_n \geqslant 0\} \subset \R$ and $\{\mathbf{v}_n\} \subset \tilde{\mathbf{H}}$ such that
		$$\{t_n\gamma_1(\epsilon_n)^+ +\mathbf{v}_n\} \subset \N'.$$
		Moreover, the set $\{\gamma_1(\epsilon_n)^+\}_n\cup\{\mathbf{U}_1^+\}$ is compact, by Lemma \ref{compact}, the sequences $\{t_n\}$ and $\{\mathbf{v_n}\}$ are bounded, up to subsequences, we may assume that
		$$t_{n} \rightarrow t \ge 0, \ \ \mathbf{v}_n =(v_{n,1}, v_{n,2}) \rightarrow \mathbf{v}=(v_{1}, v_{2}) \  \text{in} \ H_0^1(\Omega)\times H_0^1(\Omega) \ \text{ as } \ n\rightarrow \infty.$$
		Since
		$I(\gamma_1(\epsilon_n))<I(t_n\gamma_1(\epsilon_n)^++\mathbf{v}_n),$
		passing to the limit, we obtain
		$$   I(\mathbf{U}_1)\le I(t\mathbf{U}_1^++\mathbf{v}).$$
		By Lemma \ref{max}, $\mathbf{U}_1$ is the unique global maximum point of $I|_{\hat{\mathbf{H}}(\mathbf{U}_1)}$, we get that $$t\mathbf{U}_1^++\mathbf{v}=\mathbf{U}_1,$$
		which implies $t_n\rightarrow t=1$.
		Since $\ker G'(\mathbf{U}_1)\subset \mathbf{H}^+$, by Implicit Function Theorem, we obtain that
		$$ t_n\gamma_1(\epsilon_n)^++\mathbf{v}_n=\gamma_1(t_n\epsilon_n), \ \ \hbox{for  $n$ large enough}. $$
		This contradicts to  { $\gamma_1|_{(-\bar\epsilon, \bar\epsilon)\backslash\{0\}}\subset \N\backslash \N'$. Thus $\gamma_1|_{(-\bar\epsilon, \bar\epsilon)\backslash\{0\}}\cap \N'\ne\emptyset$.} From the inequality (\ref{eqcp}), we know
		\begin{equation}\label{eq:lessthansemi-1}
			c'=\inf\limits_{\N'}I<I(\mathbf{U}_1).
		\end{equation}
		Similarly, when $\beta > \hat{\beta}_2$, we have that
		\begin{equation}\label{eq:lessthansemi-2}
			c'<I(\mathbf{U}_2).
		\end{equation}
		The desired conclusion follows directly from \eqref{eq:lessthansemi-1},   \eqref{eq:lessthansemi-2} and \eqref{eq:level_csem}. The proof is complete.
	\end{proof}

	\begin{lemma}\label{PS} The functional 	$I$ satisfies the Palais--Smale condition.
	\end{lemma}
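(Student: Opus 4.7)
The plan is to verify (PS) in two stages: bound any (PS) sequence, then extract a strongly convergent subsequence. Let $\{\mathbf{u}_n\} \subset \mathbf{H}$ satisfy $I(\mathbf{u}_n) \to c$ and $I'(\mathbf{u}_n) \to 0$ in $\mathbf{H}^*$, and decompose $\mathbf{u}_n = \mathbf{u}_n^+ + \mathbf{u}_n^0 + \mathbf{u}_n^-$.

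For the first stage I would exploit the $4$-homogeneity of $F$, i.e.\ $f(\mathbf{u})\cdot\mathbf{u} = 4F(\mathbf{u})$. This yields $\int_\Omega F(\mathbf{u}_n)\dx = I(\mathbf{u}_n) - \tfrac{1}{2} I'(\mathbf{u}_n)\mathbf{u}_n = c + o(1+\|\mathbf{u}_n\|)$, so that $\|\mathbf{u}_n\|_{L^4}^4 \le C(1+\|\mathbf{u}_n\|)$. Since the decomposition $\mathbf{H}=\mathbf{H}^+\oplus \mathbf{H}^0\oplus \mathbf{H}^-$ is simultaneously orthogonal for $\la\cdot,\cdot\ra_{\mathbf{H}}$ and for $J$, one has $J(\mathbf{u}_n,\mathbf{u}_n^+) = J(\mathbf{u}_n^+,\mathbf{u}_n^+) \ge c_0\|\mathbf{u}_n^+\|^2$ with $c_0>0$. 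Testing $I'(\mathbf{u}_n)$ against $\mathbf{u}_n^\pm$, together with the H\"older/Sobolev estimate $|\int_\Omega f(\mathbf{u}_n)\cdot\mathbf{u}_n^\pm\dx| \le C\|\mathbf{u}_n\|_{L^4}^3\|\mathbf{u}_n^\pm\|$, then gives $\|\mathbf{u}_n^\pm\| \le C(1+\|\mathbf{u}_n\|)^{3/4}$.

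The main obstacle is that testing against $\mathbf{u}_n^0$ only gives $\int_\Omega f(\mathbf{u}_n)\cdot\mathbf{u}_n^0\dx = o(\|\mathbf{u}_n^0\|)$, which does not bound $\|\mathbf{u}_n^0\|$ by itself. I would argue by contradiction: if $\|\mathbf{u}_n\|\to\infty$, the bound above forces $\|\mathbf{u}_n^\pm\|^2 = O(\|\mathbf{u}_n\|^{3/2}) = o(\|\mathbf{u}_n\|^2)$, whence $\|\mathbf{u}_n^0\|/\|\mathbf{u}_n\|\to 1$. Normalize $\mathbf{v}_n := \mathbf{u}_n/\|\mathbf{u}_n\|$; then $\mathbf{v}_n^\pm \to 0$ in $\mathbf{H}$, and since $\mathbf{H}^0$ is finite-dimensional, $\mathbf{v}_n^0$ converges along a subsequence to some $\mathbf{v}^0$ with $\|\mathbf{v}^0\|=1$, so $\mathbf{v}_n \to \mathbf{v}^0$ in $\mathbf{H}$ and in $L^4(\Omega)$. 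On the other hand, $\|\mathbf{v}_n\|_{L^4}^4 = \|\mathbf{u}_n\|_{L^4}^4/\|\mathbf{u}_n\|^4 \to 0$, forcing $\mathbf{v}^0 = 0$ a.e.\ on $\Omega$. But each nonzero component of $\mathbf{v}^0$ is an eigenfunction of the Dirichlet Laplacian $-\Delta$, hence real-analytic in $\Omega$, and unique continuation forbids vanishing on a set of positive measure. This contradiction establishes boundedness.

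For the second stage, extract $\mathbf{u}_n\rightharpoonup \mathbf{u}$ in $\mathbf{H}$, which converges strongly in $L^4(\Omega)$ by Rellich--Kondrachov (available since $N\le 3$). The expansion $I'(\mathbf{u}_n)(\mathbf{u}_n^+-\mathbf{u}^+) = J(\mathbf{u}_n^+-\mathbf{u}^+,\mathbf{u}_n^+-\mathbf{u}^+) + J(\mathbf{u}^+,\mathbf{u}_n^+-\mathbf{u}^+) - \int_\Omega f(\mathbf{u}_n)\cdot(\mathbf{u}_n^+-\mathbf{u}^+)\dx$ has left side vanishing, the second term vanishing by weak convergence, the third by strong $L^4$-convergence of $\{\mathbf{u}_n\}$, and the first term bounding $c_0\|\mathbf{u}_n^+-\mathbf{u}^+\|^2$ from above; hence $\mathbf{u}_n^+ \to \mathbf{u}^+$ in $\mathbf{H}$. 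The analogous calculation with $-J$ (which is coercive on $\mathbf{H}^-$) gives $\mathbf{u}_n^- \to \mathbf{u}^-$, and $\mathbf{u}_n^0\to\mathbf{u}^0$ follows from the finite-dimensionality of $\mathbf{H}^0$. Altogether $\mathbf{u}_n \to \mathbf{u}$ strongly in $\mathbf{H}$, completing the verification of (PS).
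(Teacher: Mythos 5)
Your argument is correct and is essentially the standard proof that the paper delegates to \cite[Lemma 3.2]{clapp_solutions_2020}: boundedness via the identity $\int_\Omega F(\mathbf{u}_n)\,\dx = I(\mathbf{u}_n)-\tfrac12 I'(\mathbf{u}_n)\mathbf{u}_n$, control of $\mathbf{u}_n^\pm$ by the definiteness of $J$ on $\mathbf{H}^\pm$, a normalized contradiction argument for the kernel direction, and then strong convergence from the compact embedding $H_0^1(\Omega)\hookrightarrow L^4(\Omega)$. One minor simplification: once $\mathbf{v}_n\to\mathbf{v}^0$ in $L^4$ while $\|\mathbf{v}_n\|_{L^4}\to 0$, you get $\mathbf{v}^0=0$ as an element of $\mathbf{H}$, which already contradicts $\|\mathbf{v}^0\|=1$ without invoking unique continuation.
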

	This lemma can be proved in a similar way as \cite[Lemma 3.2]{clapp_solutions_2020}, and we omit the details.
	At last, we are ready to present the proof of Theorem \ref{gs}.
	
	\begin{proof}[Proof of Theorem \ref{gs}]
		We first claim that the nontrivial critical point set $\mathcal{K}$ of $I$ is bounded away from $0$.
		For  $\mathbf{u}=(u_1, u_2) \in  \mathcal{K}$, there holds that
		\begin{equation*}
			I'(\mathbf{u})(u_{1}^+,0)=0 \quad \text{i.e.,}\quad J_1(u_{1}^+,u_{1}^+)=\int_{\Omega}(\mu_1 u_{1}^3u_{1}^+ +\beta u_{1}u_{1}^+u_{2}^2) \dx.
		\end{equation*}
		By the {H\"older's inequality}, we have
		\begin{equation*}
			J_1(u_{1}^+,u_{1}^+)\le\mu_1\|u_{1}\|_{L^4}^3\|u_{1}^+\|_{L^4}+\beta\|u_{1}\|_{L^4}\|u_{1}^+\|_{L^4}\|u_{2}\|_{L^4}^2.
		\end{equation*} It follows, for some constant $C>0$ independent of $\mathbf{u}\in \mathcal{K}$, that $$\|u_{1}^+\|^2_{H^1_0} \le C\|\mathbf{u}\|^4.$$
		In the same way, we also have that $$\|u^-_{1}\|^2_{H^1_0}, \ \ \|u_{2}^+\|^2_{H^1_0}, \ \ \|u^-_{2}\|^2_{H^1_0} \le C\|\mathbf{u}\|^4.$$
		If $\mathbf{H}^0=\{\mathbf{0}\}$, then we have
		$$\|\mathbf{u}\|^2=\|u_{1}^+\|^2_{H^1_0}+\|u^-_{1}\|^2_{H^1_0}+\|u_{2}^+\|^2_{H^1_0}+\|u^-_{2}\|^2_{H^1_0}\le C\|\mathbf{u}\|^4.$$
		Since $\|\mathbf{u}\|\ne0$, it follows that
		$$\|\mathbf{u}\|\ge \frac{1}{\sqrt{C}}.$$
		
		Let $\mathbf{H}^0\ne \{\mathbf{0}\}$. If the claim does not hold,  then there exists $\{\mathbf{u}_n\}\subset \mathcal{K}$ such that
		$$ \|\mathbf{u}_n\| \rightarrow 0, \ \ n\to\infty.$$   We set $\mathbf{w}_n=\mathbf{u}_n/\|\mathbf{u}_n\|$. Then $\|\mathbf{w}_n\|\equiv 1$ for all $n\in \mathbb{N}$ and
		$$\|\mathbf{w}_n^+\|, \ \|\mathbf{w}_n^-\|\le  C \|\mathbf{u}_n\|^2 \rightarrow 0, \ \ \ n\to\infty.$$
		Therefore
		$$\|\mathbf{w}_n^0\|^2=1-\|\mathbf{w}_n^+\|^2-\|\mathbf{w}_n^-\|^2\rightarrow 1, \ \ \ n\to\infty.$$
		Up to subsequence, we assume that $\mathbf{w}_n^0\rightarrow \mathbf{w}^0 \ne \mathbf{0}$ in $\mathbf{H}$ by the equivalence of weak and strong convergence in finite dimensional space. Dividing both sides of $I'(\mathbf{u}_n)\mathbf{u}_n^0=0$ by $\|\mathbf{u}_n\|^4$ and passing to the limit as $n\rightarrow \infty$, we get
		$$0=\int_{\Omega}[\mu_1 (w^0_1)^4+\mu_2 (w^0_2)^4+2\beta (w^0_1w^0_2)^2] \dx>0,$$
		which is a contradiction, thus the claim holds.
		
		We consider a minimizing sequence $\{\mathbf{u}_n\}\subset \mathcal{K}$ which satisfies $I(\mathbf{u}_n)\rightarrow e$. Obviously, $\{\mathbf{u}_n\}$ is a Palais-Smale sequence of $I$ at the level $e$.  By Lemma \ref{PS}, there exists $\mathbf{u}\in \mathbf{H}$ such that, up to a subsequence if necessary,
		$$ \mathbf{u}_n \rightarrow \mathbf{u^*}\ \hbox{in} \ \mathbf{H}, \ \  I(\mathbf{u^*})=e.$$
		Moreover, in view of Lemma \ref{ct}, $c_l$ is a critical value of $I$. It follows that
		$$I(\mathbf{u^*})=e\le c_l\le c'<c_{sem},$$ and  $\mathbf{u}^*\in \mathbf{H}$ is a ground state solution. The proof is complete.
	\end{proof}
	
	We would like to point out that  the parameters $\tau_1$ and $\tau_2$ in Theorem \ref{gs} can be chosen as any given real numbers.
	Although the above proof emphasizes the indefinite case, it is also effective to the definite case by taking $\tilde{\mathbf{H}}=\{\mathbf{0}\}$.
	Therefore, our arguments in the proof is more general in comparing with the known existence results of ground state such as \cite{ambrosetti_stading_2007}.
	
	\section{Further study on the case $\tau_1=\tau_2=\lambda_1$}
	
	In this section we consider the special case $\tau_1=\tau_2=\lambda_1$.  Then the system \eqref{sy1} can be rewritten as follows:
	\begin{equation}\label{sy2}
		\begin{cases}
			-\Delta u_1-\lambda_1 u_1=\mu_1u_1^3+\beta u_1u_2^2 & \text { in } \Omega
			\\-\Delta u_2-\lambda_1 u_2=\mu_2u_2^3+\beta u_1^{2}u_2 & \text { in } \Omega
			\\u_1=u_2=0 & \text { on } \partial\Omega.
		\end{cases}
	\end{equation}
	In this case, $\tilde{\mathbf{H}}=\{(t_1\varphi_1,t_2\varphi_1):\ t_1,t_2\in \R\}$.

	We first investigate the ground state solution for $\beta>0$ small.
	
	\begin{lemma}\label{unique1}
		Assume that $0<\beta<3\sqrt{\mu_1\mu_2}$. Then for every $\mathbf{u}\in \mathbf{H}^+\backslash \{\mathbf{0}\}$, there exists a unique $\mathbf{v}(\mathbf{u})\in \tilde{\mathbf{H}}$ such that	
		\begin{equation}\label{3.2} I(\mathbf{u}+\mathbf{v}(\mathbf{u}))=\max \limits_{\mathbf{z} \in \tilde{\mathbf{H}}}I(\mathbf{u}+\mathbf{z}).\end{equation}
	\end{lemma}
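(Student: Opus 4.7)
My plan is to reduce the maximization of $\mathbf{z}\mapsto I(\mathbf{u}+\mathbf{z})$ on $\tilde{\mathbf{H}}$ to a strictly convex, coercive minimization problem on $\R^2$, and then invoke standard convex analysis. The starting observation is that since $\tau_1=\tau_2=\lambda_1$ we have $\varphi_1\in H_i^0=\ker(-\Delta-\lambda_1)$, so $J_i(\varphi_1,\varphi_1)=0$ and $J_i(u_i,\varphi_1)=0$ for every $u_i\in H_i^+$. Writing $\mathbf{z}=(t_1\varphi_1,t_2\varphi_1)$, this gives
$$I(\mathbf{u}+\mathbf{z}) = \tfrac{1}{2}J(\mathbf{u},\mathbf{u}) - \Phi(t_1,t_2), \qquad \Phi(t_1,t_2):=\int_\Omega F(u_1+t_1\varphi_1,\,u_2+t_2\varphi_1)\dx,$$
so the task reduces to showing that $\Phi:\R^2\to\R$ admits a unique minimizer.

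The central step is strict convexity of $\Phi$. With $A=u_1+t_1\varphi_1$, $B=u_2+t_2\varphi_1$, the pointwise Hessian $H_F(A,B)$ has positive trace $(3\mu_1+\beta)A^2+(3\mu_2+\beta)B^2$ and determinant
$$3(3\mu_1\mu_2-\beta^2)A^2B^2+3\beta(\mu_1A^4+\mu_2B^4).$$
The AM--GM inequality $\mu_1A^4+\mu_2B^4\ge 2\sqrt{\mu_1\mu_2}\,A^2B^2$ bounds this below by $3A^2B^2(\sqrt{\mu_1\mu_2}+\beta)(3\sqrt{\mu_1\mu_2}-\beta)$, which is nonnegative exactly when $\beta<3\sqrt{\mu_1\mu_2}$; a separate inspection of the boundary cases $AB=0$ shows the determinant is in fact strictly positive whenever $(A,B)\neq(0,0)$. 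Hence $H_F$ is positive definite off the origin. Since $\mathbf{u}\in\mathbf{H}^+\backslash\{\mathbf{0}\}$ and $H_i^+$ is $H_0^1$-orthogonal to $\mathrm{span}\{\varphi_1\}$, the nonzero component cannot satisfy $u_i+t_i\varphi_1=0$ in $H_0^1$, so $(A(x),B(x))\neq(0,0)$ on a set of positive measure; combined with $\varphi_1>0$ a.e.\ in $\Omega$, the identity $D^2\Phi(t_1,t_2)[s_1,s_2]^2=\int_\Omega H_F(A,B)[s_1,s_2]^2\,\varphi_1^2\,\dx$ then yields $D^2\Phi>0$ in every direction, i.e., strict convexity of $\Phi$.

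Coercivity follows from $\Phi(t_1,t_2)\ge\tfrac{\mu_1}{4}\|u_1+t_1\varphi_1\|_{L^4}^4+\tfrac{\mu_2}{4}\|u_2+t_2\varphi_1\|_{L^4}^4$, each summand being a quartic polynomial in $t_i$ with positive leading coefficient $\|\varphi_1\|_{L^4}^4$. A continuous, strictly convex, coercive function on $\R^2$ attains its infimum at a unique point $(t_1^\ast,t_2^\ast)$, and $\mathbf{v}(\mathbf{u}):=(t_1^\ast\varphi_1,t_2^\ast\varphi_1)\in\tilde{\mathbf{H}}$ is then the unique maximizer in \eqref{3.2}.

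The main obstacle is extracting the sharp threshold $\beta<3\sqrt{\mu_1\mu_2}$ from the Hessian determinant: a direct Cauchy--Schwarz estimate on the off-diagonal integral $\int_\Omega AB\varphi_1^2\dx$ would only yield the smaller range $\beta<\sqrt{3\mu_1\mu_2}$, so the AM--GM sharpening of the pointwise quartic terms is essential.
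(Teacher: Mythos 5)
Your proof is correct, and it takes a genuinely different (and in fact more elementary) route than the paper. The paper establishes existence via its Lemma~2.1 (the functional is $\le 0$ on $\mathbf{H}(\mathbf{u})$ outside a large ball) together with the finite dimension of $\tilde{\mathbf{H}}$, and establishes uniqueness indirectly: it shows that every critical point of $I|_{\mathbf{u}+\tilde{\mathbf{H}}}$ is a local maximum (via the same second-derivative computation you perform, with the same weighted AM--GM step producing the threshold $\beta<3\sqrt{\mu_1\mu_2}$), and then rules out two distinct local maxima by a mountain-pass/deformation argument on the two-dimensional fiber, invoking the Palais--Smale condition and \cite[Theorem 4.10]{mawhin_critical:1989}. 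You instead observe that, since $J$ vanishes identically on directions involving $\varphi_1$, the problem reduces to minimizing $\Phi(t_1,t_2)=\int_\Omega F(u_1+t_1\varphi_1,u_2+t_2\varphi_1)\dx$ on $\R^2$, and that the pointwise Hessian of $F$ is positive definite off the origin precisely for $\beta<3\sqrt{\mu_1\mu_2}$; combined with $\varphi_1\ne0$ a.e.\ and the orthogonality $H_i^+\perp\varphi_1$ (which guarantees $(A,B)\ne(0,0)$ on a set of positive measure for every $(t_1,t_2)$), this gives \emph{global} strict concavity of $I$ on the affine fiber, so existence and uniqueness follow from coercivity and strict convexity alone. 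Your argument thus bypasses the Palais--Smale condition and the mountain-pass machinery entirely, and yields a slightly stronger conclusion (global strict concavity of $I|_{\mathbf{u}+\tilde{\mathbf{H}}}$ rather than mere uniqueness of the maximizer); the underlying algebraic identity — the determinant $3(3\mu_1\mu_2-\beta^2)A^2B^2+3\beta(\mu_1A^4+\mu_2B^4)\ge 3A^2B^2(\sqrt{\mu_1\mu_2}+\beta)(3\sqrt{\mu_1\mu_2}-\beta)$ — is the same sharp estimate the paper uses at critical points, so the two proofs agree on where the hypothesis $\beta<3\sqrt{\mu_1\mu_2}$ enters.
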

	\begin{proof}
		For any given $\mathbf{u}\in \mathbf{H}^+\backslash \{\mathbf{0}\}$, by the proof of Lemma \ref{compact}, we can find $R>0$ such that
		$$I|_{\mathbf{H}(\mathbf{u})\backslash B_R(\mathbf{0})}< I(\mathbf{u}).$$
		Since the subspace $\tilde{\mathbf{H}}$ has a finite dimension, there exists a $\mathbf{v}(\mathbf{u})\in \tilde{\mathbf{H}}$ such that $$I(\mathbf{u}+\mathbf{v}(\mathbf{u}))= \max \limits_{\mathbf{z} \in \tilde{\mathbf{H}}}I(\mathbf{u}+\mathbf{z}).$$
		
		Next, we show the uniqueness of $\mathbf{v}(\mathbf{u})$. We claim that  any nontrivial critical point $\mathbf{w}$ of $I|_{\mathbf{u}+\tilde{\mathbf{H}}}$ is a local maximum point.  In fact, we have
		$I'(\mathbf{w})(\varphi_1,0)=I'(\mathbf{w})(0,\varphi_1)=0$, and for any $t_1,t_2\in \R$, it holds that
		\begin{align*}
			\langle &I''(\mathbf{w})(t_1\varphi_1,t_2\varphi_1),(t_1\varphi_1,t_2\varphi_1)\rangle
			\\=&-3\int_{\Omega}(\mu_1t_1^2w_1^2+\mu_2t_2^2w_2^2)\varphi_1^2  \dx-\beta\int_{\Omega} (t_1^2w_2^2+t_2^2w_1^2+4t_1t_2w_1w_2)\varphi_1^2  \dx.
		\end{align*}
		When $t_1\ne0$ and $t_2\ne0$, we have
		\begin{align*}
			\langle &I''(\mathbf{w})(t_1\varphi_1,t_2\varphi_1),(t_1\varphi_1,t_2\varphi_1)\rangle
			\\\le& -3\int_{\Omega}(\mu_1t_1^2w_1^2+\mu_2t_2^2w_2^2)\varphi_1^2  \dx
			+2\beta \int_{\Omega}|t_1t_2w_1w_2|\varphi_1^2 \dx
			\\\le& -3\int_{\Omega}\left[\left(\mu_1-\frac{\beta\sqrt{\mu_1}}{3\sqrt{\mu_2}}\right)t_1^2w_1^2+\left(\mu_2-\frac{\beta\sqrt{\mu_2}}{3\sqrt{\mu_1}}\right)t_2^2w_2^2\right]\varphi_1^2  \dx.
		\end{align*}
		When $t_1\ne0$ and $t_2=0$, we have
		\begin{align*}
			\langle I''(\mathbf{w})(t_1\varphi_1,t_2\varphi_1),(t_1\varphi_1,t_2\varphi_1)\rangle
			=-3\mu_1\int_{\Omega}t_1^2w_1^2\varphi_1^2  \dx-\beta\int_{\Omega} t_1^2w_2^2\varphi_1^2  \dx.
		\end{align*}
		When $t_1=0$ and $t_2\ne0$, we have
		\begin{align*}
			\langle I''(\mathbf{w})(t_1\varphi_1,t_2\varphi_1),(t_1\varphi_1,t_2\varphi_1)\rangle
			=-3\mu_2\int_{\Omega}t_2^2w_2^2\varphi_1^2  \dx-\beta\int_{\Omega} t_2^2w_1^2\varphi_1^2  \dx.
		\end{align*}
		Since $0<\beta<3\sqrt{\mu_1\mu_2}$ and $\varphi_1\neq 0$ a.e., in all of the above cases, we have
		$$\langle I''(\mathbf{w})(t_1\varphi_1,t_2\varphi_1),(t_1\varphi_1,t_2\varphi_1)\rangle<0.$$
		Thus, $\mathbf{w}$ is a local maximum point of $I|_{\mathbf{u}+\tilde{\mathbf{H}}}$.
		
		At last, we only need to show that the local maximum point of $I|_{\mathbf{u}+\tilde{\mathbf{H}}}$ is unique. We assume that  $\mathbf{z}\in\mathbf{u}+\tilde{\mathbf{H}}$ is also a local maximum point of $I|_{\mathbf{u}+\tilde{\mathbf{H}}}$.  Without loss of generality, we assume that $I(\mathbf{w})\ge I(\mathbf{z})$. Since $\mathbf{u}+\tilde{\mathbf{H}}$ is a two dimensional manifold, there exists a bounded open neighborhood $U(\mathbf{z})$ of $\mathbf{z}$ in $\mathbf{u}+\tilde{\mathbf{H}}$ such that $\mathbf{w}\in (\mathbf{u}+\tilde{\mathbf{H}})\backslash U(\mathbf{z})$ and
		$$\inf\limits_{\partial U(\mathbf{z})}(-I)>\max\{-I(\mathbf{w}),-I(\mathbf{z})\}.$$
		By Lemma \ref{PS}, $-I|_{\mathbf{u}+\tilde{\mathbf{H}}}$  satisfies the Palais--Smale condition. There is a mountain pass type critical point by \cite[Theorem 4.10]{mawhin_critical:1989}, which contradicts to the claim proved in the second step. Therefore the uniqueness holds and the proof is complete.
	\end{proof}

	\begin{lemma}\label{lem:homogeneity}
		Assume that $0<\beta<3\sqrt{\mu_1\mu_2}$. Then for any $t>0$, $\mathbf{v}(t\mathbf{u})=t\mathbf{v}(\mathbf{u})$.
	\end{lemma}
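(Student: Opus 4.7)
The plan is to verify that $t\mathbf{v}(\mathbf{u})$ is a critical point of the restriction $\mathbf{z}\mapsto I(t\mathbf{u}+\mathbf{z})$ on $\tilde{\mathbf{H}}$, and then to invoke the uniqueness provided by Lemma \ref{unique1} applied to $t\mathbf{u}\in\mathbf{H}^+\setminus\{\mathbf{0}\}$ to conclude that $\mathbf{v}(t\mathbf{u})=t\mathbf{v}(\mathbf{u})$. The argument exploits two structural features of the special case $\tau_1=\tau_2=\lambda_1$: the $3$-homogeneity of $f=\nabla F$, and the vanishing of the bilinear form $J$ between $\mathbf{H}$ and $\tilde{\mathbf{H}}$.

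The first step I would record is the identity $J(\mathbf{a},\mathbf{w})=0$ for every $\mathbf{a}\in\mathbf{H}$ and every $\mathbf{w}=(s_1\varphi_1,s_2\varphi_1)\in\tilde{\mathbf{H}}$. Indeed, since $-\Delta\varphi_1=\lambda_1\varphi_1$ and $a_i\in H_0^1(\Omega)$, integration by parts yields
\[
J_i(a_i,s_i\varphi_1)=s_i\int_\Omega\nabla a_i\cdot\nabla\varphi_1\,\dx-\tau_i s_i\int_\Omega a_i\varphi_1\,\dx=s_i(\lambda_1-\tau_i)\int_\Omega a_i\varphi_1\,\dx=0,
\]
using $\tau_i=\lambda_1$. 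Applying this identity to the Euler--Lagrange equation $\la I'(\mathbf{u}+\mathbf{v}(\mathbf{u})),\mathbf{w}\ra=0$ satisfied by the maximizer $\mathbf{v}(\mathbf{u})$ in \eqref{3.2}, one gets
\[
\int_\Omega f(\mathbf{u}+\mathbf{v}(\mathbf{u}))\cdot\mathbf{w}\,\dx=0 \qquad\text{for every }\mathbf{w}\in\tilde{\mathbf{H}}.
\]

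Next I would test $t\mathbf{v}(\mathbf{u})$ against an arbitrary $\mathbf{w}\in\tilde{\mathbf{H}}$. Since $f$ is $3$-homogeneous,
\[
\la I'(t\mathbf{u}+t\mathbf{v}(\mathbf{u})),\mathbf{w}\ra=J(t\mathbf{u}+t\mathbf{v}(\mathbf{u}),\mathbf{w})-t^3\int_\Omega f(\mathbf{u}+\mathbf{v}(\mathbf{u}))\cdot\mathbf{w}\,\dx=0,
\]
the first term vanishing by the identity of the previous paragraph and the second by the displayed relation. Hence $t\mathbf{v}(\mathbf{u})$ is a critical point of $I|_{t\mathbf{u}+\tilde{\mathbf{H}}}$.

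To conclude, Lemma \ref{unique1} applied to $t\mathbf{u}\in\mathbf{H}^+\setminus\{\mathbf{0}\}$ furnishes a unique maximizer $\mathbf{v}(t\mathbf{u})$; moreover, the proof of that lemma actually shows that every critical point of $I|_{t\mathbf{u}+\tilde{\mathbf{H}}}$ is a local maximum point and that local maxima coincide via the mountain-pass contradiction, where the assumption $0<\beta<3\sqrt{\mu_1\mu_2}$ enters. Consequently $t\mathbf{v}(\mathbf{u})=\mathbf{v}(t\mathbf{u})$. There is no serious obstacle here: once one observes that $J$ vanishes on $\mathbf{H}\times\tilde{\mathbf{H}}$, the scaling statement reduces to a one-line consequence of the $3$-homogeneity of $f$.
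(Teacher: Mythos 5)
Your argument is correct, but it runs along a different track from the paper's. You work at the first-order level: you observe that $J(\cdot,\mathbf{w})=0$ for $\mathbf{w}\in\tilde{\mathbf{H}}$ (since $\tilde{\mathbf{H}}$ is spanned by $\varphi_1$ in each component and $\tau_i=\lambda_1$), extract the Euler--Lagrange relation $\int_\Omega f(\mathbf{u}+\mathbf{v}(\mathbf{u}))\cdot\mathbf{w}\,\dx=0$, and use the $3$-homogeneity of $f$ to show that $t\mathbf{v}(\mathbf{u})$ is a critical point of $I|_{t\mathbf{u}+\tilde{\mathbf{H}}}$; you then must appeal not merely to the \emph{statement} of Lemma \ref{unique1} (uniqueness of the global maximizer) but to the content of its \emph{proof}, namely that every critical point of the restriction is a strict local maximum and that local maxima are unique via the mountain-pass contradiction --- this is where $0<\beta<3\sqrt{\mu_1\mu_2}$ enters for you. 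The paper instead stays at the zeroth-order level: the same vanishing of $J$ on $\tilde{\mathbf{H}}$ shows that maximizing $I(\mathbf{u}+\cdot)$ over $\tilde{\mathbf{H}}$ is equivalent to minimizing $\int_\Omega F(\mathbf{u}+\cdot)\,\dx$, and the $4$-homogeneity of $F$ yields the two-sided comparison
\[
\int_{\Omega} F(t\mathbf{u}+\mathbf{v}(t\mathbf{u}))\,\dx \le t^4\int_{\Omega} F(\mathbf{u}+\mathbf{v}(\mathbf{u}))\,\dx \le \int_{\Omega} F(t\mathbf{u}+\mathbf{v}(t\mathbf{u}))\,\dx,
\]
forcing equality throughout, after which only the uniqueness of the maximizer as stated in Lemma \ref{unique1} is needed. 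Both proofs are valid and rest on the same two structural facts; the paper's version is slightly more economical in that it does not reopen the second-order/mountain-pass analysis, while yours makes the variational characterization of $\mathbf{v}(\mathbf{u})$ explicit, which can be convenient later.
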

	\begin{proof} For each $\mathbf{u} \in \mathbf{H}^+ \backslash \{\mathbf{0}\}$, by Lemma \ref{unique1}, there is a unique $\mathbf{v}(\mathbf{u})$ satisfying
		\eqref{3.2}. Note that $J(\mathbf{u},\mathbf{u})=J(\mathbf{u}+\mathbf{v}(\mathbf{u}),\mathbf{u}+\mathbf{v}(\mathbf{u}))$, then $$\int_{\Omega}F(\mathbf{u}+\mathbf{v}(\mathbf{u})) \dx=\min\limits_{\mathbf{v}\in\tilde{\mathbf{H}}}\int_{\Omega}F(\mathbf{u}+\mathbf{v}) \dx.$$ 		
		By homogeneity, for any $t>0$, we have
		\begin{align*}
			\int_{\Omega} F(t\mathbf{u}+\mathbf{v}(t\mathbf{u})) \dx &\le\int_{\Omega} F(t\mathbf{u}+t\mathbf{v}(\mathbf{u})) \dx
			\\&=t^4	\int_{\Omega} F(\mathbf{u}+\mathbf{v}(\mathbf{u})) \dx
			\\&\le t^4\int_{\Omega} F\left(\mathbf{u}+\frac{\mathbf{v}(t\mathbf{u})}{t}\right) \dx
			\\&= \int_{\Omega} F(t\mathbf{u}+\mathbf{v}(t\mathbf{u})) \dx,
		\end{align*}
		which implies every inequality must be equality, so $\mathbf{v}(t\mathbf{u})=t\mathbf{v}(\mathbf{u})$ by the uniqueness.
	\end{proof}
	\begin{lemma} \label{unique2}
		Assume that $0<\beta<3\sqrt{\mu_1\mu_2}$. For every $\mathbf{u}\in \mathbf{H}^+\backslash\{\mathbf{0}\}$, the set $\N'\cap \hat{\mathbf{H}}(\mathbf{u})$
		consists of precisely one point $\hat{\mathbf{u}}$ which is the unique global maximum point of $I|_{\hat{\mathbf{H}}(\mathbf{u})}$. Moreover, $\N=\N'$.
	\end{lemma}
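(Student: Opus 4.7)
The plan is to reduce the maximization of $I$ on the slice $\hat{\mathbf{H}}(\mathbf{u})=\R^+\mathbf{u}\oplus\tilde{\mathbf{H}}$ to a scalar polynomial problem, by eliminating the $\tilde{\mathbf{H}}$-direction with Lemma \ref{unique1} and then exploiting the scaling relation of Lemma \ref{lem:homogeneity}; both the singleton property and the identity $\N=\N'$ will then follow by direct comparison. Concretely, fix $\mathbf{u} \in \mathbf{H}^+ \setminus \{\mathbf{0}\}$ and write a typical element of $\hat{\mathbf{H}}(\mathbf{u})$ as $t\mathbf{u} + \mathbf{z}$ with $t\geq 0$, $\mathbf{z}\in\tilde{\mathbf{H}}$. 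For each $t>0$, Lemma \ref{unique1} applied to $t\mathbf{u}$ combined with Lemma \ref{lem:homogeneity} yields $\max_{\mathbf{z}\in\tilde{\mathbf{H}}} I(t\mathbf{u}+\mathbf{z}) = I(t(\mathbf{u}+\mathbf{v}(\mathbf{u})))$. Setting $\mathbf{y} := \mathbf{u}+\mathbf{v}(\mathbf{u})$ and using that $J$ vanishes on $\tilde{\mathbf{H}}=\mathbf{H}^0$, one obtains
\[
h(t) := I(t\mathbf{y}) = \tfrac{t^2}{2} J(\mathbf{u},\mathbf{u}) - t^4 \int_\Omega F(\mathbf{y})\,\dx = A t^2 - B t^4
\]
with $A,B>0$, so $h$ has a unique positive critical point $t^*$, which is also its global maximum on $[0,\infty)$.

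Set $\hat{\mathbf{u}} := t^* \mathbf{y}$. For any $\mathbf{w} \in \hat{\mathbf{H}}(\mathbf{u})\setminus\tilde{\mathbf{H}}$ one has $\R^+\mathbf{w}\oplus\tilde{\mathbf{H}} = \R^+\mathbf{u}\oplus\tilde{\mathbf{H}}$, so $\hat{\mathbf{H}}(\mathbf{w}) = \hat{\mathbf{H}}(\mathbf{u})$; in particular $\hat{\mathbf{u}}\in \N'$, and any other element of $\N'\cap\hat{\mathbf{H}}(\mathbf{u})$ would also be a global maximum of $I$ on $\hat{\mathbf{H}}(\mathbf{u})$, hence equals $\hat{\mathbf{u}}$. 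For $\N=\N'$, only $\N\subset\N'$ needs proof since $\N'\subset\N$ is already noted. Given $\mathbf{u}\in\N$, split $\mathbf{u}=\mathbf{u}^++\tilde{\mathbf{u}}$ with $\mathbf{u}^+\neq\mathbf{0}$. The condition $I'(\mathbf{u})|_{\tilde{\mathbf{H}}}=0$ makes $\mathbf{u}$ a critical point of $I|_{\mathbf{u}^++\tilde{\mathbf{H}}}$; the proof of Lemma \ref{unique1} shows every such critical point is a local maximum and the local maximum is unique, hence $\tilde{\mathbf{u}}=\mathbf{v}(\mathbf{u}^+)$, so that $\mathbf{u}$ coincides with the vector $\mathbf{y}$ built from $\mathbf{u}^+$. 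Then $I'(\mathbf{u})\mathbf{u}=0$ reads $h'(1)=0$, and the uniqueness of the positive critical point of $h$ forces $t^*=1$, giving $\mathbf{u}=\hat{\mathbf{u}}\in\N'$.

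There is no deep analytical obstacle, but the reduction hinges on the exact linearity $\mathbf{v}(t\mathbf{u}) = t\mathbf{v}(\mathbf{u})$ supplied by Lemma \ref{lem:homogeneity}: without this homogeneity the family of fiber-maximizers $\{t\mathbf{u}+\mathbf{v}(t\mathbf{u})\}_{t>0}$ would not lie on a single ray through the origin and the collapse to the scalar polynomial $h$ would break down. The hypothesis $0<\beta<3\sqrt{\mu_1\mu_2}$ enters precisely because it is what underlies the uniqueness in Lemma \ref{unique1} and the homogeneity in Lemma \ref{lem:homogeneity}.
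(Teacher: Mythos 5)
Your proposal is correct and follows essentially the same route as the paper: Lemmas \ref{unique1} and \ref{lem:homogeneity} collapse $\N\cap\hat{\mathbf{H}}(\mathbf{u})$ onto the ray $\{t(\mathbf{u}+\mathbf{v}(\mathbf{u})):t>0\}$, and the one-variable function $t\mapsto I(t(\mathbf{u}+\mathbf{v}(\mathbf{u})))=At^2-Bt^4$ has a unique positive critical point, which yields both the singleton property and $\N=\N'$. The only cosmetic difference is that the paper concludes $\N'\cap\hat{\mathbf{H}}(\mathbf{u})=\N\cap\hat{\mathbf{H}}(\mathbf{u})$ by combining the singleton property of $\N\cap\hat{\mathbf{H}}(\mathbf{u})$ with the nonemptiness from Corollary \ref{maximum}, whereas you verify directly that $t^*(\mathbf{u}+\mathbf{v}(\mathbf{u}))\in\N'$ and then argue $\N\subset\N'$; both are valid.
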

	
	\begin{proof}
		Fix $\mathbf{u}\in \mathbf{H}^+\backslash\{\mathbf{0}\}$. By Corollary \ref{maximum}, we have $\N'\cap \hat{\mathbf{H}}(\mathbf{u})\neq \emptyset$. Since $\N'\subset \N$, it suffices to show that $\N\cap \hat{\mathbf{H}}(\mathbf{u})$ contains at most one element.
		
		By Lemma \ref{unique1} and Lemma \ref{lem:homogeneity}, we see that the elements of $\N\cap \hat{\mathbf{H}}(\mathbf{u})$ must take form
		\begin{equation}\label{eq:lem33-0}
			\{t(\mathbf{u}+\mathbf{v}(\mathbf{u})): \ t>0\},
		\end{equation}
		and satisfy
		\begin{equation}\label{eq:lem33-1}
			I '(t(\mathbf{u}+\mathbf{v}(\mathbf{u}))|_{\tilde{\mathbf{H}}(\mathbf{u})} = 0\quad \text{for some} \ t>0.
		\end{equation}
		We only need to  {show} that there exists a unique $t>0$ such that
		\begin{equation}\label{eq:unique_t}
			I'(t(\mathbf{u}+\mathbf{v}(\mathbf{u})))(\mathbf{u}+\mathbf{v}(\mathbf{u}))=0.
		\end{equation}
		By direct calculation, the function of $t$ on the left-hand side of \eqref{eq:unique_t} has a unique critical point which reads as
		$$ t_{\mathbf{u}+\mathbf{v}(\mathbf{u})}=\left[\frac {J(\mathbf{u}+\mathbf{v}(\mathbf{u}),\mathbf{u}+\mathbf{v}(\mathbf{u}))}{\int_\Omega f(\mathbf{u}+\mathbf{v}(\mathbf{u}))\cdot (\mathbf{u}+\mathbf{v}(\mathbf{u}))\dx}\right]^\frac{1}{2},$$
		that is
		\begin{equation}\label{eq:lem33-2}
			I'(t_{\mathbf{u}+\mathbf{v}(\mathbf{u})}(\mathbf{u}+\mathbf{v}(\mathbf{u}))(\mathbf{u}+\mathbf{v}(\mathbf{u}))=0.
		\end{equation} By \eqref{eq:lem33-0}, \eqref{eq:lem33-1} and \eqref{eq:lem33-2},
		we have that $$ \N\cap \hat{\mathbf{H}}(\mathbf{u})=\left\{t_{\mathbf{u}+\mathbf{v}(\mathbf{u})}(\mathbf{u}+\mathbf{v}(\mathbf{u}))\right\}.$$
		Recalling the fact that for any $\mathbf{u}\in \mathbf{H}^+\backslash\{\mathbf{0}\}$, there holds
		$$\emptyset\neq\N'\cap \hat{\mathbf{H}}(\mathbf{u})\subset \N\cap \hat{\mathbf{H}}(\mathbf{u}),$$ thus $\N'\cap \hat{\mathbf{H}}(\mathbf{u})= \N\cap \hat{\mathbf{H}}(\mathbf{u})$, and consequently $\N=\N'$.
	\end{proof}
	
	To prove the existence and synchronization of the least energy solution, we introduce the notation
	\begin{equation*}
		S'=\inf\limits_{\mathbf{u}\in \N'}\frac{J(\mathbf{u},\mathbf{u})}{\left(\int_{\Omega}f(\mathbf{u})\cdot\mathbf{u} \dx\right)^{1/2}}.
	\end{equation*}
	In the following two lemmas we give the characterizations of the number $S'$ for all  $\beta>0$.
	\begin{lemma}\label{at1}
		If $\beta>0$,  then the infimum $S'$ is achieved.
	\end{lemma}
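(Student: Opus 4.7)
The plan is to reduce the problem of achieving $S'$ to that of achieving $c' := \inf_{\N'} I$, then exhibit a minimizer as the fiber maximum (via Corollary \ref{maximum}) associated to the weak limit of a minimizing sequence. Since $\N' \subset \N$, every $\mathbf{u} \in \N'$ satisfies the Nehari identity $J(\mathbf{u},\mathbf{u}) = \int_\Omega f(\mathbf{u})\cdot\mathbf{u}\,\dx = 4\int_\Omega F(\mathbf{u})\,\dx$, so the quotient defining $S'$ reduces to $\sqrt{J(\mathbf{u},\mathbf{u})} = 2\sqrt{I(\mathbf{u})}$ and thus $S' = 2\sqrt{c'}$. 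Moreover, the $\N'$-inequality $I(\mathbf{u}) \ge I(t\mathbf{u}+\mathbf{v})$ with the choices $\mathbf{v} = -t\tilde{\mathbf{u}}$ and $t = r/\|\mathbf{u}^+\|$ gives $I(\mathbf{u}) \ge I(r\mathbf{u}^+/\|\mathbf{u}^+\|) \ge \alpha := \inf_{S_r^+} I$, which is strictly positive by Lemma \ref{l-g}. Hence $c' \ge \alpha > 0$ and $S' > 0$.

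Next, pick a minimizing sequence $\{\mathbf{u}_n\} \subset \N'$ with $I(\mathbf{u}_n) \to c'$, and establish $\mathbf{H}$-boundedness. In the current setting $\tau_1 = \tau_2 = \lambda_1$, the bilinear form $J$ vanishes on $\tilde{\mathbf{H}} = \mathbf{H}^0$ and $J(\mathbf{u}_n^+, \tilde{\mathbf{u}}_n) = 0$, so $J(\mathbf{u}_n,\mathbf{u}_n) = J(\mathbf{u}_n^+, \mathbf{u}_n^+) = 4I(\mathbf{u}_n)$ is bounded, forcing $\{\mathbf{u}_n^+\}$ bounded via the spectral gap on $\mathbf{H}^+$. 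Since $\int_\Omega F(\mathbf{u}_n)\,\dx = I(\mathbf{u}_n)$ is bounded, $\{u_{n,i}\}$ is bounded in $L^4(\Omega)$, and because $\tilde{\mathbf{H}}$ is two-dimensional and spanned by $\varphi_1$ in each slot, the $\varphi_1$-coefficients of $\tilde{\mathbf{u}}_n$ are also bounded by a triangle-inequality argument in $L^4$. Along a subsequence, $\mathbf{u}_n \rightharpoonup \mathbf{u}$ in $\mathbf{H}$ and $\mathbf{u}_n \to \mathbf{u}$ in $L^4(\Omega) \times L^4(\Omega)$.

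The delicate step will be verifying $\mathbf{u}^+ \neq 0$, which ensures $\hat{\mathbf{H}}(\mathbf{u})$ is a genuine fiber so that Corollary \ref{maximum} applies. From $I'(\mathbf{u}_n)\mathbf{u}_n^+ = 0$ we have $J(\mathbf{u}_n^+, \mathbf{u}_n^+) = \int_\Omega f(\mathbf{u}_n) \cdot \mathbf{u}_n^+\,\dx$; the left-hand side is $\ge 4\alpha + o(1)$ by the first paragraph, while the right-hand side converges to $\int_\Omega f(\mathbf{u}) \cdot \mathbf{u}^+\,\dx$ by strong $L^4$ convergence, so $\int_\Omega f(\mathbf{u}) \cdot \mathbf{u}^+\,\dx \ge 4\alpha > 0$ and hence $\mathbf{u}^+ \neq 0$. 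Corollary \ref{maximum} then produces a global maximizer $\hat{\mathbf{u}} = t\mathbf{u}+\mathbf{v} \in \N' \cap \hat{\mathbf{H}}(\mathbf{u})$ of $I$ on $\hat{\mathbf{H}}(\mathbf{u})$ with $t > 0$ and $\mathbf{v} \in \tilde{\mathbf{H}}$, so $I(\hat{\mathbf{u}}) \ge c'$. For the reverse inequality, define $\mathbf{w}_n := t\mathbf{u}_n + \mathbf{v} \in \hat{\mathbf{H}}(\mathbf{u}_n)$; the $\N'$-condition gives $I(\mathbf{u}_n) \ge I(\mathbf{w}_n)$, while $\mathbf{w}_n \rightharpoonup \hat{\mathbf{u}}$ in $\mathbf{H}$ with $\mathbf{w}_n \to \hat{\mathbf{u}}$ in $L^4 \times L^4$. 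Weak lower semicontinuity of $I$ (gradient term convex; $L^2$ and $L^4$ terms continuous via compact Sobolev embedding for $N \le 3$) then yields $I(\hat{\mathbf{u}}) \le \liminf I(\mathbf{w}_n) \le \lim I(\mathbf{u}_n) = c'$. Thus $I(\hat{\mathbf{u}}) = c'$ and $\hat{\mathbf{u}}$ attains $S' = 2\sqrt{c'}$. The main obstacle is precisely this nontriviality of $\mathbf{u}^+$, which hinges on the strict positivity $c' \ge \alpha > 0$ supplied by the linking geometry of Lemma \ref{l-g}.
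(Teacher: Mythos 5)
Your argument is correct, and while the preliminaries track the paper's, the attainment step takes a genuinely different route. The shared part: positivity of $S'$ via $I(\mathbf{u})\ge I(r\mathbf{u}^+/\|\mathbf{u}^+\|)\ge\inf_{S_r^+}I>0$, the reduction $J(\mathbf{u},\mathbf{u})=\int_\Omega f(\mathbf{u})\cdot\mathbf{u}\,\dx=4I(\mathbf{u})$ on $\N'\subset\N$ (so minimizing the quotient is minimizing $I|_{\N'}$), boundedness of the minimizing sequence split into the $\mathbf{H}^+$-part (spectral gap) and the finite-dimensional $\tilde{\mathbf{H}}$-part via $L^4$ control, and the nontriviality $\mathbf{u}^+\ne0$ (your version, testing $I'(\mathbf{u}_n)\mathbf{u}_n^+=0$ and passing to the limit in $L^4$, is a clean variant of the paper's, which instead uses $\int_\Omega f(\mathbf{u}_0)\cdot\mathbf{u}_0\,\dx=S'^2>0$ together with the limiting fiber-minimality of $\int_\Omega F$). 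The divergence comes afterward. The paper keeps the weak limit $\mathbf{u}_0$ itself: it passes the inequality $\int_\Omega f(\mathbf{u}_n)\cdot\mathbf{u}_n\,\dx\le\int_\Omega f(\mathbf{u}_n+\mathbf{v})\cdot(\mathbf{u}_n+\mathbf{v})\,\dx$ to the limit, introduces the Nehari scaling $t_{\mathbf{u}_0}$, shows $t_{\mathbf{u}_0}\le1$ by Fatou and excludes $t_{\mathbf{u}_0}<1$ by comparing $4I(t_{\mathbf{u}_0}\mathbf{u}_0)$ with $S'^2$, concluding $\mathbf{u}_0\in\N'$. You instead project back onto $\N'$ via Corollary \ref{maximum}, obtaining $\hat{\mathbf{u}}=t\mathbf{u}+\mathbf{v}\in\N'\cap\hat{\mathbf{H}}(\mathbf{u})$ with $I(\hat{\mathbf{u}})\ge c'$ automatically, and get the reverse inequality by testing the $\N'$-condition of $\mathbf{u}_n$ against $\mathbf{w}_n=t\mathbf{u}_n+\mathbf{v}$ and invoking weak lower semicontinuity of $I$ (valid here: the gradient term is weakly lsc and the $L^2$ and $L^4$ terms are weakly continuous for $N\le3$). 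This is the Szulkin--Weth style closing argument; it avoids the scaling analysis entirely, at the modest cost of producing the minimizer as $\hat{\mathbf{u}}$ rather than as the weak limit itself (a posteriori they coincide, but you never need that). Both proofs are complete.
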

	\begin{proof} Firstly, we claim that $S'>0$. In fact, for any $\mathbf{u}\in \N'$ and $s>0$, we have $$I(\mathbf{u})=\frac{1}{2}J(\mathbf{u},\mathbf{u})-\int_\Omega F(\mathbf{u}) \dx\ge\frac{1}{2}J(s\mathbf{u}^+,s\mathbf{u}^+)-\int_\Omega F(s\mathbf{u}^+)  \dx.$$
		Since $$\int_\Omega F(s\mathbf{u}^+)=o(\|s\mathbf{u}^+\|^2), \ \ \hbox{as} \  \|s\mathbf{u}^+\|\rightarrow 0,$$
		there exists $r>0$ small enough independent of $\mathbf{u}$ such that
		$$I(\mathbf{u})\ge\inf\limits_{S_r^+}I>0,$$
		where $S_r^+:=\{\mathbf{u}\in \mathbf
		{H}^+: \|\mathbf{u}\|=r\}.$  Since $\N'\subset \N$, for every $\mathbf{u}\in\N'$, we have
		$$J(\mathbf{u},\mathbf{u})=\int_{\Omega}f(\mathbf{u})\cdot\mathbf{u}\dx=4I(\mathbf{u}).$$
		Then
		\begin{equation}\label{eq:lem34-0}
			S'=\inf\limits_{\mathbf{u}\in \N'}\frac{J(\mathbf{u},\mathbf{u})}{(\int_{\Omega}f(\mathbf{u})\cdot\mathbf{u} \dx)^\frac{1}{2}}=\inf\limits_{\mathbf{u}\in {\N'}}\left(\int_{\Omega}f(\mathbf{u})\cdot\mathbf{u} \dx\right)^\frac{1}{2}=\inf\limits_{\mathbf{u}\in {\N'}}(4I(\mathbf{u}))^\frac{1}{2}>0.
		\end{equation}
		Next we show that $S'$ can be achieved. Take a minimizing sequence $\{\mathbf{u}_n\}$ of $S'$, i.e.,
		$$J(\mathbf{u}_n^+,\mathbf{u}_n^+)^\frac{1}{2}=J(\mathbf{u}_n,\mathbf{u}_n)^\frac{1}{2}=\frac{J(\mathbf{u}_n,\mathbf{u}_n)}{(\int_{\Omega}f(\mathbf{u}_n)\cdot\mathbf{u}_n \dx)^\frac{1}{2}}\rightarrow S',$$
		which indicates that the sequence $\{\mathbf{u}_n^+\}$ is bounded in $\mathbf{H}$. 
		{There holds
			\begin{align*}
				\sum_{i=1}^2\mu_i\int_\Omega|\tilde{u}_{n,i}|^4\dx&\leq 2^4\left(\sum_{i=1}^2\mu_i\int_\Omega(|{u}_{n,i}|^4 + |{u}_{n,i}^+|^4)\dx \right)\\
				&\leq 2^4\left(\int_\Omega f(\mathbf{u}_n)\cdot\mathbf{u}_n \dx + C\|\mathbf{u}_n^+\|^4\right).
			\end{align*}
			Combining with the boundedness of $\{\mathbf{u}_n^+\}$ and the fact $\int_{\Omega}f(\mathbf{u}_n)\cdot\mathbf{u}_n \dx\rightarrow S'^2$, we have that $\{\tilde{\mathbf{u}}_n\}$ is bounded in $L^4(\Omega)\times L^4(\Omega)$. Since $\dim \tilde{\mathbf{H}}=2<\infty$, $\{\tilde{\mathbf{u}}_{n}\}$ is also bounded in $\mathbf{H}$.} In conclusion, the sequence $\{\mathbf{u}_n\}$ is bounded in $\mathbf{H}$. Up to subsequences, we have $\mathbf{u}_n\rightharpoonup\mathbf{u}_0=(u_{0,1},u_{0,2})$ in $\mathbf{H}$ and $u_{n,i}\rightarrow u_{0,i}$ in $L^4(\Omega)$. Then $$\int_{\Omega}f(\mathbf{u}_n)\cdot\mathbf{u}_n \dx\rightarrow \int_{\Omega} f(\mathbf{u}_0)\cdot\mathbf{u}_0 \dx\quad \text{as}\ n\to\infty.$$
		Since $\mathbf{u}_n\in \N'$, it holds that for any $\mathbf{v}\in \tilde{\mathbf{H}}$ that
		\begin{equation*}
			\int_{\Omega}f(\mathbf{u}_n)\cdot\mathbf{u}_n \dx\le\int_{\Omega}f(\mathbf{u}_n+\mathbf{v})\cdot(\mathbf{u}_n+\mathbf{v}) \dx,
		\end{equation*}
		passing to the limit we get
		\begin{equation}\label{eq:lem34-1}
			\int_{\Omega}f(\mathbf{u}_0)\cdot\mathbf{u}_0 \dx\le\int_{\Omega}f(\mathbf{u}_0+\mathbf{v})\cdot(\mathbf{u}_0+\mathbf{v}) \dx.
		\end{equation}
		Multiplying $t^4$ on both sides of \eqref{eq:lem34-1} and replacing $t\mathbf{v}$ with $\mathbf{v}$, by using the homogeneity of $f$, we obtain
		\begin{equation}\label{eq:lem34-2}
			\int_{\Omega}f(t\mathbf{u}_0)\cdot (t\mathbf{u}_0) \dx\le\int_{\Omega}f(t\mathbf{u}_0+\mathbf{v})\cdot(t\mathbf{u}_0+\mathbf{v}) \dx,\quad \forall \ t>0, \mathbf{v}\in \tilde{\mathbf{H}}.
		\end{equation}
		
		We claim that $\mathbf{u}_0^+\ne0$.  Otherwise, $\mathbf{u}_0=\tilde{\mathbf{u}}_0$.
		We take $\mathbf{v}=-\tilde{\mathbf{u}}_0$ and $t=1$ in \eqref{eq:lem34-2}, then
		$$\int_{\Omega}f(\tilde{\mathbf{u}}_0)\cdot\tilde{\mathbf{u}}_0 \dx\le\int_{\Omega}f(\tilde{\mathbf{u}}_0+\mathbf{v})\cdot(\tilde{\mathbf{u}}_0+\mathbf{v}) \dx=0.$$
		Thus $\mathbf{u}_0=\tilde{\mathbf{u}}_0=\mathbf{0}$, which contradicts to \eqref{eq:lem34-0}.
		
		To prove that $S'$ is achieved, we only need to show $\mathbf{u}_0\in \N'$.   Since $\mathbf{u}_0^+\ne0$, it follows that the number
		$$t_{\mathbf{u}_0}:=\frac{J(\mathbf{u}_0,\mathbf{u}_0)}{\int_{\Omega}f(\mathbf{u}_0)\cdot\mathbf{u}_0 \dx}>0.$$   By Fatou's Lemma, we have that
		\begin{equation*}\label{eq:lem34-3}
			t_{\mathbf{u}_0}=\frac{J(\mathbf{u}_0,\mathbf{u}_0)}{\int_{\Omega}f(\mathbf{u}_0)\cdot\mathbf{u}_0 \dx}\le \lim\limits_{n\rightarrow\infty}\frac{J(\mathbf{u}_n,\mathbf{u}_n)}{\int_{\Omega}f(\mathbf{u}_n)\cdot\mathbf{u}_n \dx}=1.
		\end{equation*}
		We claim that $t_{\mathbf{u}_0}=1$. If not, then $t_{\mathbf{u}_0}\in(0,1)$. Notice that $t_{\mathbf{u}_0}\mathbf{u}_0$ satisfies \eqref{eq:lem34-2} and
		$$I'(t_{\mathbf{u}_0}\mathbf{u}_0)(t_{\mathbf{u}_0}\mathbf{u}_0)=0,$$
		which implies $t_{\mathbf{u}_0}\mathbf{u}_0\in \N'$. Then
		\begin{align*}
			S'^2\le
			4 I(t_{\mathbf{u}_0}\mathbf{u}_0)& =t^4_{\mathbf{u}_0} \int_{\Omega}f(\mathbf{u}_0)\cdot\mathbf{u}_0 \dx\\
			&< \int_{\Omega}f(\mathbf{u}_0)\cdot\mathbf{u}_0 \dx=\lim\limits_{n\rightarrow \infty}  \int_{\Omega}f(\mathbf{u}_n)\cdot\mathbf{u}_n \dx= S'^2,
		\end{align*}
		which is a contradiction. Thus  {$t_{\mathbf{u}_0}=1$} and $\mathbf{u}_0\in \N'$. It follows from
		$$S'\le  \frac{J(\mathbf{u}_0,\mathbf{u}_0)}{(\int_{\Omega}f(\mathbf{u}_0)\cdot\mathbf{u}_0 \dx)^\frac{1}{2}}\le \lim\limits_{n\rightarrow\infty}\frac{J(\mathbf{u}_n,\mathbf{u}_n)}{(\int_{\Omega}f(\mathbf{u}_n)\cdot\mathbf{u}_n \dx)^\frac{1}{2}}= S'.$$
		Thus $S'$ is achieved at $\mathbf{u}_0$.
	\end{proof}
	Consider the functional $I_*:H_0^1(\Omega)\rightarrow \R$ defined by
	$$I_*(u)=\frac{1}{2}\int_{\Omega}(|\nabla u|^2-\lambda_1 u^2) \dx-\frac{1}{4}\int_{\Omega}u^4 \dx, $$
	and the sets
	\begin{equation*} \begin{array}{ll} &
			\N_*:=\{u\in  H_0^1(\Omega)\backslash \{\R\varphi_1\}:
			I'_*(u)u=0,I'_*(u)\varphi_1=0\}, \\[3mm] &
			\N'_*:=\{u\in  H_0^1(\Omega)\backslash \{\R\varphi_1\} :I_*(u)\ge
			I_*(tu+k\varphi_1)\text{ for all } t\ge0, k\in\R \}. \end{array}
	\end{equation*}
	By \cite[Proposition 2.3]{szulkin_ground_nodate-1}, we get that $\N_*=\N'_*$.
	Define
	\begin{equation*}
		S=\inf\limits_{u\in\N'_*}\frac{\int_{\Omega}(|\nabla u|^2-\lambda_1u^2) \dx}{\| u\|_{L^4}^2}.
	\end{equation*}  
	We give the number $S$ another characterization. For any $u\in H_0^1(\Omega)\backslash \{\R\varphi_1\}$, we consider $$\ell(k):=\frac{1}{4}\int_{\Omega}(u+k\varphi_1)^4 \dx,\ k\in\R.$$ Since $\varphi_1\ne 0 $ a.e., when $k_1\ne k_2$, we have
	\begin{align*}
		&\langle \ell'(k_1)-\ell'(k_2), k_1-k_2\rangle
		\\=&\int_{\Omega}[(u+k_1\varphi_1)^3-(u+k_2\varphi_1)^3](k_1\varphi_1-k_2\varphi_1) \dx
		\\=&\int_{\Omega}(k_1\varphi_1-k_2\varphi_1)^2[(u+k_1\varphi_1)^2+(u+k_1\varphi_1)(u+k_2\varphi_1)+(u+k_2\varphi_1)^2] \dx>0.
	\end{align*}
	By \cite[Theorems 1.5.10]{badiale_semilinear_2010},
	$\ell(k)$ is strictly convex about $k$. Moreover, $\ell(k)$ is coercive, then there exists a unique $k(u)\in \R$ such that
	\begin{equation}\label{eq:def-ku}
		\int_{\Omega}(u+k(u)\varphi_1)^4 \dx=\min\limits_{k\in\R}\int_{\Omega}(u+k\varphi_1)^4 \dx.
	\end{equation}
	{We have $I'_*(u+k(u)\varphi_1)\varphi_1=0,$ and $t_u(u+k(u)\varphi_1)\in \N_*=\N'_*$, where $t_u$ is given by $$t_u=\frac{\int_{\Omega}(|\nabla (u+k(u)\varphi_1)|^2-\lambda_1(u+k(u)\varphi_1)^2) \dx}{\| u+k(u)\varphi_1\|_{L^4}^4}.$$} Therefore
	\begin{align*}
		S&=\inf\limits_{u\in H_0^1(\Omega)\backslash \{\R\varphi_1\}}\frac{t_u^2\int_{\Omega}(|\nabla (u+k(u)\varphi_1)|^2-\lambda_1(u+k(u)\varphi_1)^2) \dx}{t_u^2\| u+k(u)\varphi_1\|_{L^4}^2}
		\\&=\inf\limits_{u\in H_0^1(\Omega)\backslash \{\R\varphi_1\}}\frac{\int_{\Omega}(|\nabla u|^2-\lambda_1u^2) \dx}{\| u+k(u)\varphi_1\|_{L^4}^2}.
	\end{align*}
	Let $U$ be a ground state solution of
	\begin{equation}\label{eq:scalar-lambda1}
		-\Delta u-\lambda_1u=u^3,\ u\in H^1_0(\Omega).
	\end{equation}
	Then we also have
	\begin{equation}\label{least}
		S=\frac{\int_{\Omega}(|\nabla U|^2-\lambda_1U^2) \dx }{\| U\|_{L^4}^2}.
	\end{equation}
	The number $S$ is independent of the choice of $U$. Define an auxiliary function
	\begin{equation}\label{eq:aux-h}
		h(t_1,t_2)=\frac{t_{1}^{2}+t_{2}^{2}}{(\mu_1t_{1}^{4}+\mu_2t_2^{4}+2\beta t_{1}^{2}t_{2}^{2})^\frac{1}{2}}.
	\end{equation}
	\begin{lemma}\label{cp}
		Assume that $\beta>0$. Then we have $S'\ge\inf\limits_{t_1,t_2>0}h(t_1,t_2)S.$ In particular, if $S'=\inf\limits_{t_1,t_2>0}h(t_1,t_2)S$, then there exists a minimizer for $S'$, whose components are proportional.
	\end{lemma}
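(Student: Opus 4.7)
The strategy is a two-component extension of the scalar $S$-inequality, realized by separating the $\varphi_1$-component of each $u_i$, followed by extraction of a synchronized minimizer in the equality case. To prove the inequality, fix $\mathbf{u}=(u_1,u_2)\in\N'$; for each $i$ let $k_i\in\R$ be the minimizer of $k\mapsto\int_\Omega(u_i+k\varphi_1)^4\dx$ (unique by the convexity argument preceding \eqref{eq:def-ku}, with the convention $k_i=-s_i$ if $u_i=s_i\varphi_1\in\R\varphi_1$); set $w_i:=u_i+k_i\varphi_1$ and $a_i:=\|w_i\|_{L^4}$. Since $\varphi_1\in\ker(-\Delta-\lambda_1)$, the quadratic form satisfies $J_i(u_i,u_i)=J_i(w_i,w_i)$, and the variational characterization of $S$ obtained just before \eqref{least} yields $J_i(u_i,u_i)\ge Sa_i^2$, trivially when $u_i\in\R\varphi_1$; summing gives $J(\mathbf{u},\mathbf{u})\ge S(a_1^2+a_2^2)$.

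For the denominator I would test the $\N'$-condition against $\mathbf{v}:=(k_1\varphi_1,k_2\varphi_1)\in\tilde{\mathbf{H}}$; since $J$ is unchanged by shifts in $\tilde{\mathbf{H}}$, the inequality $I(\mathbf{u})\ge I(\mathbf{u}+\mathbf{v})$ reduces to $\int_\Omega F(\mathbf{u})\dx\le\int_\Omega F(w_1,w_2)\dx$. Hölder's inequality on the cross term $\int_\Omega w_1^2 w_2^2\dx\le a_1^2 a_2^2$, combined with the $4$-homogeneity identity $\int_\Omega f(\mathbf{u})\cdot\mathbf{u}\dx=4\int_\Omega F(\mathbf{u})\dx$, then gives $\int_\Omega f(\mathbf{u})\cdot\mathbf{u}\dx\le\mu_1 a_1^4+\mu_2 a_2^4+2\beta a_1^2 a_2^2$. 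Assembling,
$$\frac{J(\mathbf{u},\mathbf{u})}{\bigl(\int_\Omega f(\mathbf{u})\cdot\mathbf{u}\,\dx\bigr)^{1/2}}\ \ge\ S\,h(a_1,a_2)\ \ge\ S\inf_{t_1,t_2>0}h(t_1,t_2),$$
the boundary case $a_j=0$ being covered by the limit value $1/\sqrt{\mu_{3-j}}\ge\inf h$ and the assumption $\mathbf{u}\notin\tilde{\mathbf{H}}$ ruling out $a_1=a_2=0$. Taking the infimum over $\N'$ yields $S'\ge S\inf h$.

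For the equality case $S'=S\inf h$, I would invoke Lemma \ref{at1} to fix a minimizer $\mathbf{u}_0\in\N'$; every inequality above must then be saturated. The equality $J_i(u_{0,i},u_{0,i})=Sa_i^2$ forces each nonzero $w_{0,i}$ to realize the infimum defining $S$, hence (via its Euler--Lagrange equation) to be a scalar multiple of a ground state of \eqref{eq:scalar-lambda1}; the Hölder equality $\int_\Omega w_{0,1}^2 w_{0,2}^2\dx=a_1^2 a_2^2$ together with unique continuation upgrades this to $w_{0,i}=\alpha_i U$ for a common ground state $U$ and scalars $\alpha_1,\alpha_2$ not both zero. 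Setting $\mathbf{u}_*:=(\alpha_1 U,\alpha_2 U)$, the shift identity $t\mathbf{u}_*+\mathbf{v}=t\mathbf{u}_0+\bigl(\mathbf{v}+t(k_1\varphi_1,k_2\varphi_1)\bigr)$ and the equality $\int_\Omega F(\mathbf{u}_0)\dx=\int_\Omega F(\mathbf{u}_*)\dx$ transfer the $\N'$-membership from $\mathbf{u}_0$ to $\mathbf{u}_*$ and preserve the defining ratio, producing the required synchronized minimizer.

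The main obstacle I anticipate is the rigidity step in the equality analysis: promoting the a.e.\ proportionality $|w_{0,1}|\propto|w_{0,2}|$ produced by Hölder, together with the fact that each $w_{0,i}$ satisfies a scalar Schrödinger equation, to an honest linear proportionality $w_{0,1}\propto w_{0,2}$ with globally consistent sign. This relies on unique continuation for \eqref{eq:scalar-lambda1} and requires care because its ground states are necessarily sign-changing at $\tau=\lambda_1$.
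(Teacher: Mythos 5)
Your proof is correct and follows essentially the same route as the paper's: shift each component by the $L^4$-minimizing multiple of $\varphi_1$, bound the quadratic form from below by $S$ componentwise, bound the quartic form from above via the $\N'$-minimality over $\tilde{\mathbf{H}}$-translates together with Cauchy--Schwarz on the cross term, and read off proportionality from the saturation of these inequalities. Your write-up is in places tighter than the paper's --- you prove the inequality for arbitrary $\mathbf{u}\in\N'$ rather than only at a minimizer, you transfer $\N'$-membership to $(\alpha_1U,\alpha_2U)$ through the energy identity $\int_{\Omega}F(\mathbf{u}_0)\,\dx=\int_{\Omega}F(\mathbf{u}_*)\,\dx$ rather than through the derivative conditions (which a priori only give membership in $\N$), and you rightly flag that Cauchy--Schwarz equality yields only $w_{0,1}^2\propto w_{0,2}^2$, a point the paper elides; the unique-continuation argument you sketch (applied to $U_1\mp U_2$, each solving a linear equation with bounded potential, whose product vanishes a.e.) does close that gap.
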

	\begin{proof}
		By Lemma \ref{at1}, we may assume that $S'$ is achieved at $\mathbf{u}=(u_1,u_2)$. Let
		$$ t_{i}=\left(\frac{\int_{\Omega}|\hat{u}_{i}|^4 \dx}{\int_{\Omega}|U|^4 \dx}\right)^\frac{1}{4}, $$ where $\hat{u}_{i}=u_{i}+k(u_{i})\varphi_1, \ i=1,2$ and $k(\cdot)$ is given by \eqref{eq:def-ku}.
		Then
		\begin{equation}\label{e1}
			\int_{\Omega}|\hat{u}_{i}|^4 \dx=t_{i}^4\int_{\Omega}|U|^4 \dx,
		\end{equation}
		\begin{equation}\label{e2}
			\int_{\Omega}|\hat{u}_{1}\hat{u}_{2}|^2 \dx\le \left(\int_{\Omega}|\hat{u}_{1}|^4 \dx\right)^\frac{1}{2}\left(\int_{\Omega}|\hat{u}_{2}|^4 \dx\right)^\frac{1}{2}=t_{1}^2t_{2}^2\int_{\Omega}|U|^4 \dx,
		\end{equation}
		There exists $t$ such that $t\hat{u}_i\in \mathcal{N}_*'$.
		By the definition of $S$ and \eqref{least}, we get
		\begin{equation}\label{e3}
			\frac{\int_{\Omega}(|\nabla \hat{u}_1|^2-\lambda_1\hat{u}_1^2) \dx}{(\int_{\Omega}|\hat{u}_{1}|^4 \dx)^\frac{1}{2}}\ge\frac{\int_{\Omega}(|\nabla U|^2-\lambda_1U^2) \dx}{(\int_{\Omega}|U|^4 \dx)^\frac{1}{2}},
		\end{equation}
		\begin{equation}\label{e4} \frac{\int_{\Omega}(|\nabla \hat{u}_2|^2-\lambda_1\hat{u}_2^2) \dx}{(\int_{\Omega}|\hat{u}_{2}|^4 \dx)^\frac{1}{2}}\ge\frac{\int_{\Omega}(|\nabla U|^2-\lambda_1U^2) \dx}{(\int_{\Omega}|U|^4 \dx)^\frac{1}{2}}.
		\end{equation}
		By (\ref{e1}), (\ref{e2}), (\ref{e3}) and (\ref{e4}), we obtain
		\begin{align*}
			S'=&\frac{J(\mathbf{u},\mathbf{u})}{(\int_{\Omega}f(\mathbf{u})\cdot\mathbf{u} \dx)^\frac{1}{2}}
			\\=&\frac{\int_{\Omega}(|\nabla \hat{u}_1|^2-\lambda_1\hat{u}_1^2) \dx+\int_{\Omega}(|\nabla \hat{u}_2|^2-\lambda_1\hat{u}_2^2) \dx}{(\int_{\Omega}f(\mathbf{u})\cdot\mathbf{u} \dx)^\frac{1}{2}}
			\\ \ge &\frac{\int_{\Omega}(|\nabla \hat{u}_1|^2-\lambda_1\hat{u}_1^2) \dx+\int_{\Omega}(|\nabla \hat{u}_2|^2-\lambda_1\hat{u}_2^2) \dx}{(\int_{\Omega}f(\hat{u}_{1},\hat{u}_{2})\cdot(\hat{u}_{1},\hat{u}_{2}) \dx)^\frac{1}{2}}
			\\ \ge& \frac{(t_{1}^{2}+t_{2}^{2})\int_{\Omega}(|\nabla U|^2-\lambda_1U^2) \dx}{(\mu_1t_{1}^{4}+\mu_2t_{2}^{4}+2\beta t_{1}^{2}t_{2}^{2})^\frac{1}{2}(\int_{\Omega}|U|^4 \dx)^\frac{1}{2}}
			\\ \ge&\inf\limits_{t_1,t_2>0}h(t_1,t_2)S,
		\end{align*}
		the equality holds only if $\hat{u}_1$ and $\hat{u}_2$ are proportional. Furthermore, if $S'=\inf\limits_{t_1,t_2>0}h(t_1,t_2)S$, then we have that $S'$ is achieved at $t(\hat{u}_1,\hat{u}_2)$, where
		$$t=\frac{\int_{\Omega}(|\nabla \hat{u}_1|^2-\lambda_1\hat{u}_1^2) \dx+\int_{\Omega}(|\nabla \hat{u}_2|^2-\lambda_1\hat{u}_2^2) \dx}{\int_{\Omega}f(\hat{u}_{1},\hat{u}_{2})\cdot(\hat{u}_{1},\hat{u}_{2}) \dx}.$$  In fact, we have
		$$\int_{\Omega}f(\mathbf{u})\cdot\mathbf{u} \dx=\int_{\Omega}f(\hat{u}_{1},\hat{u}_{2})\cdot(\hat{u}_{1},\hat{u}_{2}) \dx=\min\limits_{\mathbf{v}\in\tilde{\mathbf{H}}}\int_{\Omega}f(\mathbf{u}+\mathbf{v})\cdot(\mathbf{u}+\mathbf{v})dx,$$
		which leads to $I'(t\hat{u}_1,t\hat{u}_2)\mathbf{v}=0$ for all $\mathbf{v}\in\tilde{\mathbf{H}}$. By direct calculation, we obtain
		$$I'(t\hat{u}_1,t\hat{u}_2)(t\hat{u}_1,t\hat{u}_2)=0.$$
		So  {$(t\hat{u}_1,t\hat{u}_2)\in \N'$}. Moreover, the equality implies
		$$S'=\frac{\int_{\Omega}(|\nabla \hat{u}_1|^2-\lambda_1\hat{u}_1^2) \dx+\int_{\Omega}(|\nabla \hat{u}_2|^2-\lambda_1\hat{u}_2^2) \dx}{(\int_{\Omega}f(\hat{u}_{1},\hat{u}_{2})\cdot(\hat{u}_{1},\hat{u}_{2}) \dx)^\frac{1}{2}},$$
		then $S'$ is achieved at $(t\hat{u}_1,t\hat{u}_2)$.
	\end{proof}
	\begin{lemma}\label{at2}
		We assume that $0<\beta<3\sqrt{\mu_1\mu_2}$. Then $S'=\inf\limits_{t_1,t_2>0} h(t_1,t_2)S$.
	\end{lemma}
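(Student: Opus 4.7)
By Lemma \ref{cp} we already have $S'\ge \inf_{t_1,t_2>0}h(t_1,t_2)S$, so the task reduces to proving the reverse inequality $S'\le \inf_{t_1,t_2>0}h(t_1,t_2)S$. My plan is to exhibit, for every fixed pair $(t_1,t_2)$ with $t_1,t_2>0$, an explicit element of $\mathcal{N}'$ of synchronized form on which the ratio defining $S'$ equals exactly $h(t_1,t_2)S$. Because we are in the case $\tau_1=\tau_2=\lambda_1$ and $0<\beta<3\sqrt{\mu_1\mu_2}$, Lemma \ref{unique2} gives $\mathcal{N}=\mathcal{N}'$, so it suffices to produce an element of $\mathcal{N}$.

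Let $U$ be a ground state of the scalar problem \eqref{eq:scalar-lambda1}. The first key observation is that $\int_\Omega U^3\varphi_1\,dx=0$; this follows by testing the equation $-\Delta U-\lambda_1 U=U^3$ against $\varphi_1$ and integrating by parts, using $(-\Delta-\lambda_1)\varphi_1=0$. Now for fixed $t_1,t_2>0$ and a scalar $s>0$ to be chosen, set $\mathbf{w}=(st_1 U,st_2 U)$. Because $J_i(\cdot,\varphi_1)\equiv 0$ on $H_0^1(\Omega)$ (since $\varphi_1\in H_i^0$), the identity $\int U^3\varphi_1\,dx=0$ yields
\begin{equation*}
I'(\mathbf{w})(\varphi_1,0)= -s^3\bigl(\mu_1 t_1^3+\beta t_1 t_2^2\bigr)\!\int_\Omega U^3\varphi_1\,dx=0,
\end{equation*}
and analogously $I'(\mathbf{w})(0,\varphi_1)=0$. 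Thus $I'(\mathbf{w})\mathbf{v}=0$ for every $\mathbf{v}\in\tilde{\mathbf{H}}$, independently of $s$.

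It remains to choose $s>0$ so that $I'(\mathbf{w})\mathbf{w}=0$. Using $\int_\Omega(|\nabla U|^2-\lambda_1 U^2)\,dx=\|U\|_{L^4}^4$ (and hence $S=\|U\|_{L^4}^2$), the relation $I'(\mathbf{w})\mathbf{w}=0$ reduces to
\begin{equation*}
s^2(t_1^2+t_2^2)\|U\|_{L^4}^4=s^4(\mu_1 t_1^4+\mu_2 t_2^4+2\beta t_1^2 t_2^2)\|U\|_{L^4}^4,
\end{equation*}
which uniquely determines $s^2=(t_1^2+t_2^2)/(\mu_1 t_1^4+\mu_2 t_2^4+2\beta t_1^2 t_2^2)$. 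With this $s$, $\mathbf{w}\in\mathcal{N}=\mathcal{N}'$. Computing directly,
\begin{equation*}
\frac{J(\mathbf{w},\mathbf{w})}{\bigl(\int_\Omega f(\mathbf{w})\cdot\mathbf{w}\,dx\bigr)^{1/2}}
=\frac{s^2(t_1^2+t_2^2)\|U\|_{L^4}^4}{s^2\bigl(\mu_1 t_1^4+\mu_2 t_2^4+2\beta t_1^2 t_2^2\bigr)^{1/2}\|U\|_{L^4}^2}
=h(t_1,t_2)S.
\end{equation*}
Hence $S'\le h(t_1,t_2)S$ for every $t_1,t_2>0$, and taking the infimum over $(t_1,t_2)$ combined with Lemma \ref{cp} gives $S'=\inf_{t_1,t_2>0}h(t_1,t_2)S$.

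There is no real obstacle once the right trial function is identified; the proof is a short calculation. The only step that requires care is the verification that the synchronized ansatz $(st_1U,st_2U)$ automatically belongs to $\mathcal{N}$, which hinges on the orthogonality $U^3\perp\varphi_1$ inherited from the scalar Euler–Lagrange equation, and on the hypothesis $0<\beta<3\sqrt{\mu_1\mu_2}$ that (via Lemma \ref{unique2}) promotes membership in $\mathcal{N}$ to membership in $\mathcal{N}'$.
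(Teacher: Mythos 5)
Your proof is correct and follows essentially the same route as the paper: both use the synchronized trial functions $(st_1U, st_2U)$ built from a scalar ground state $U$, the orthogonality $\int_\Omega U^3\varphi_1\,dx=0$, and the explicit choice of $s$ to land in the Nehari--Pankov set, then compute the ratio to get $S'\le h(t_1,t_2)S$. The only cosmetic difference is that you invoke Lemma \ref{unique2} ($\mathcal{N}=\mathcal{N}'$) to promote membership in $\mathcal{N}$ to $\mathcal{N}'$, whereas the paper reaches the same conclusion via Lemma \ref{unique1} (the fiber maximizer satisfies $\mathbf{v}(t_1U,t_2U)=\mathbf{0}$); both routes use the hypothesis $0<\beta<3\sqrt{\mu_1\mu_2}$ in the same way.
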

	\begin{proof}
		Let $U$ be a ground state solution of \eqref{eq:scalar-lambda1}, then
		$$\int_{\Omega} U^3\varphi_1 \dx=0.$$
		By direct calculation, we get that $$I'(t_1U,t_2U) (k_1\varphi_1,k_2\varphi_1)=0, \ \text{for any }k_1,k_2\in\R, \  t_1,t_2>0.$$
		Lemma \ref{unique1} implies
		$\mathbf{v}(t_1U,t_2U)=\mathbf{0}$, then there exists $t>0$ such that  $t(t_1U,t_2U)\in \N'$. By definition of $S'$,
		\begin{align*}
			S'&\le \frac{t^2(t_{1}^{2}+t_{2}^{2})\int_{\Omega}(|\nabla U|^2-\lambda_1U^2) \dx}{t^2(\mu_1t_{1}^{4}+\mu_2t_2^{4}+2\beta t_{1}^{2}t_{2}^{2})^\frac{1}{2}(\int_{\Omega}|U|^4 \dx)^\frac{1}{2}}\\
			&=\frac{(t_{1}^{2}+t_{2}^{2})\int_{\Omega}(|\nabla U|^2-\lambda_1U^2) \dx}{(\mu_1t_{1}^{4}+\mu_2t_2^{4}+2\beta t_{1}^{2}t_{2}^{2})^\frac{1}{2}(\int_{\Omega}|U|^4 \dx)^\frac{1}{2}}\\ & = h(t_1, t_2) S, \ \ \ \forall \ t_1,t_2>0.
		\end{align*}
		Hence 	
		$$S'\le\inf\limits_{t_1,t_2>0}h(t_1,t_2)S.$$
		By Lemma \ref{cp}, we see that $S'\ge\inf\limits_{t_1,t_2>0}h(t_1,t_2)S$. Then
		$	S'=\inf\limits_{t_1,t_2>0}h(t_1,t_2)S. 		$
	\end{proof}
	\begin{proof}[Proof of Theorem \ref{inf}]
		By Lemmas \ref{cp} and \ref{at2}, there is a minimizer for $S'$, whose components are proportional.
		Note that $$c=c'=\inf_{\N'}I=\inf_{\mathbf{u}\in\N'}\frac{J(\mathbf{u},\mathbf{u})}{4}=\frac{S'^2}{4}.$$
		Then $c$ and $c'$ are also achieved at the same point.
		Note that the inequality $e\le c_l\le c'$ also holds under the assumptions here. Combining with (\ref{cineq}), we conclude that $c=e=c_l= c'$.
	\end{proof}

	{\begin{remark}
			Denote
			$$s_1^2=\frac{t_1^2}{t_1^2+t_2^2},\quad s_2^2=\frac{t_2^2}{t_1^2+t_2^2},$$
			then by recalling the auxiliary function $h$ defined in \eqref{eq:aux-h}, we obtain
			$$\inf\limits_{t_1,t_2>0}h(t_1,t_2)=\min\limits_{|(s_1,s_2)|=1}\frac{1}{(\mu_1s_1^4+\mu_2s_2^4+2\beta s_1^2s_2^2)^\frac{1}{2}}.$$
			Thus to find $\inf\limits_{t_1,t_2>0}h(t_1,t_2)$ is equivalent to determine $h^*:=\max\limits_{|(s_1,s_2)|=1}\bar{h}(s_1,s_2)$, where $$\bar{h}(s_1,s_2)=\mu_1s_1^4+\mu_2s_2^4+2\beta s_1^2s_2^2.$$
			{By \cite[Theorem 2.1]{correia_semitrivial_2016}, if $\beta\le\max\{\mu_1,\mu_2\}$, then $h^*$ can be achieved by the vector $(s_1, s_2)\in\R^2$ with one trivial component.}
			\begin{enumerate}
				\item[(i)] If $0<\beta\le\max\{\mu_1,\mu_2\}$ and $0<\beta<3\sqrt{\mu_1\mu_2}$, then we have
				$S'=\inf\limits_{t_1,t_2>0}h(t_1,t_2)S$. So there exists a semi-trivial solution to the system (\ref{sy2}) which is a ground state solution.
				\item[(ii)] When $\beta>\max\{\mu_1,\mu_2\}$, it is well known that there exists a synchronized solution, whose energy is strictly less than that of all semi-trivial solutions.
			\end{enumerate}
	\end{remark}}

	In the following, we prove Theorem \ref{cpr} which concerns the ground state solution of \eqref{sy2} for $\beta>0$ large.
	\begin{proof}[Proof of Theorem \ref{cpr}] We claim that $$S'> \inf\limits_{t_1,t_2>0}h(t_1,t_2)S.$$  If the claim does not hold, then Lemma \ref{cp} implies that $S'= \inf\limits_{t_1,t_2>0}h(t_1,t_2)S$, and $S'$ is achieved at $(\alpha_1 u,\alpha_2 u)$ with $u$ being a solution of $-\Delta u-\lambda_1u=u^3$ in $H^1_0(\Omega)$ and
		$$\alpha_1=\sqrt{\frac{\mu_2-\beta}{\mu_1 \mu_2-\beta^2}},\quad \alpha_2=\sqrt{\frac{\mu_1-\beta}{\mu_1 \mu_2-\beta^2}}.$$ According to Remark \ref{rem:1-4}, for $\beta>0$ large enough, $(\alpha_1u,\alpha_2u)\notin\N'$.
		This is a contradiction. Thus the claim holds.
		At last, we have
		$$c\le e\le \frac14 \left(\inf\limits_{t_1,t_2>0}h(t_1,t_2)S\right)^2<\frac14 S'^2=c'.$$
		The proof is complete.
	\end{proof}
	
%
%
	
	\paragraph{Acknowledgments} Supported by NSFC (12271373, 12171326).

\end{document}